\documentclass{amsart}
\usepackage{amssymb}
\usepackage{epsfig}  	
\usepackage{epic,eepic}

\usepackage{hyperref}
\newtheorem{thm}{Theorem}[section]
\newtheorem{Prop}[thm]{Proposition}
\newtheorem{Lemma}[thm]{Lemma}
\newtheorem{Cor}[thm]{Corollary}

\theoremstyle{definition}
\newtheorem{defn}[thm]{Definition}

\newtheorem*{Constructionoutline}{Construction Outline}
\newtheorem*{Construction}{Construction}
\newtheorem*{construction/proof of prop}{Construction/Proof of Proposition}

\theoremstyle{remark}

\newtheorem{Fact}[thm]{Remark}

\numberwithin{equation}{section}

\DeclareMathOperator{\RHH}{\mathbb{H}H} 
\setcounter{MaxMatrixCols}{30}

\begin{document}
	
	\title{Cohomological Operators on Quotients by Exact Zero Divisors}
	\author{Andrew Windle}
	\address{Department of Mathematics, Rockhurst University, 
		Kansas City, MO 64110}
	\email{andrew.windle@rockhurst.edu}
	\subjclass[2010]{13D07}
	\keywords{Exact Zero Divisor, Cohomology Operators, Cohomological Operators, Derived Hochschild Cohomology}

	\begin{abstract}
		Let $S$ be a commutative ring, $x, y \in S$ a pair of exact zero divisors, and $R = S/(x)$.  Let $F$ be a complex of free $R$-modules.  In this paper we explicitly compute cohomological operators of $R$ over $S$ by constructing endomorphisms of $F$.  We consider some properties of these cohomological operators, as well as provide an example in which these cohomological operators act non-trivially.
	\end{abstract}
	
	\maketitle
	
	
	\section{Introduction}
	Let $R$ be a commutative ring, and $M$, $N$ a pair of $R$-modules.  Whenever $R$ is realized as the quotient $S/I \cong R$ of another commutative ring $S$, the graded $R$-module $\text{Ext}_R(M,N)$ acquires the extra structure of a graded module over $\RHH(R/S)$, the derived Hochschild cohomology ring of $R$ over $S$.  In particular, each element of $\RHH(R/S)$ induces what we call a \textit{cohomological operator} on $R$-modules;  that is, it induces a family of elements $g_M \in \text{Ext}_R^d(M,M)$, where $d$ is the degree of the element, for each $R$-module $M$.  These elements are compatible in the following sense:  if we consider the action of $g_M$ and $g_N$ on $h \in \text{Ext}_R(M,N)$, we have that $$g_N \cdot h = (-1)^{|h|d} h \cdot g_M.$$ The only case where cohomology operators are well understood is in the case when $I$ is a complete intersection ideal.
		
	The first attempt to study $R$-modules through cohomological operators was done by Gulliksen \cite{GulOp}.  When $I$ is generated by a regular sequence, he showed that if $\text{Ext}_S(M,N)$ is finitely generated in each cohomological degree (for instance, when $M$ and $N$ are finitely generated $S$-modules) and $\text{Ext}^n_S(M,N) = 0$ for $n \gg 0$, then $\text{Ext}_R(M,N)$ is a finitely generated module over the ring $R[ \chi_1, \cdots \chi_c]$ where each $\chi_j$ is of cohomological degree 2.  The ring $R[ \chi_1, \cdots, \chi_c ]$ is sometimes referred to as the ring of cohomological operators of $R$ over $S$, since in this case $\RHH(R/S) \cong R[ \chi_1, \cdots, \chi_c]$.  This case of cohomological operators has been studied from a variety of perspectives, including \cite{mehta}, \cite{Eis}, \cite{AvOp}. The work of Avramov and Sun \cite{AvSun} shows that these various constructions of cohomological operators have similar actions on $\text{Ext}_R(M,N)$ and $\text{Tor}^R(M,N)$, differing by at most a sign.  Using these various developments of cohomological operators, results such as the development of complete intersection dimension \cite{cid}, support varieties for complete intersection rings \cite{AvB2}, and a structure theory for free resolutions over complete intersections \cite{MFSforCIs} were generated.
	
	In this paper, we consider the next simplest case for cohomological operators:  when $R$ is a quotient of $S$ by a member of an exact pair of zero divisors.  We construct explict cohomology operators of degree $2$ and $3$ and establish some of their properties.  In a future paper, we will show these operators really do arise from the action of $\RHH(R/S)$.
	
	\begin{defn} Let $S$ be a commutative ring.  We say that two elements $(x, y) \in S$ form an \textit{exact pair of zero divisors} if $\text{ann}_S(x) = (y)$ and $\text{ann}_S(y) = (x)$.  Equivalently, two elements $(x,y)$ form an exact pair of zero divisors if the unbounded complex 
		$$ \cdots \stackrel{x}{\rightarrow} S \stackrel{y}{\rightarrow} S \stackrel{x}{\rightarrow} S \stackrel{y}{\rightarrow} S \stackrel{x}{\rightarrow} \cdots $$
		is an exact sequence.  When referring to a single element from the pair, we call that element an {exact zero divisor}.
	\end{defn}
	
	Our goal is to consider the following construction, which we outline here and provide below:
	\begin{Constructionoutline} 
		Let $S$ be a commutative ring, $(x,y)$ a pair of exact zero divisors in $S$, and define $R := S/(x)$.  Let $(F, d^F)$ be a complex of free $R$-modules.		
		\begin{enumerate}
			\item Choose an arbitrary lifting of the complex $F$ to a sequence of maps of free $S$-modules, denoted $(\tilde{F}, \tilde{d})$.  The maps $\tilde{d}^2$ need not be zero (see definition \ref{liftingdef} below).
			\item For a choice of $(\tilde{F}, \tilde{d})$ there exists a degree $-2$ $S$-linear mapping $\tilde{\psi}: \tilde{F} \rightarrow \tilde{F}$ such that $x\tilde{\psi} = \tilde{d}^2$.
			\item For a choice of $\tilde{\psi}$, there exists a degree $-3$ $S$-linear mapping $\tilde{\varphi}:\tilde{F} \rightarrow \tilde{F}$ such that $\tilde{d} \circ \tilde{\psi} - \tilde{\psi} \circ \tilde{d} = y\tilde{\varphi}$.
		\end{enumerate}

		Define $\varphi := \tilde{\varphi} \otimes_S R$ and $\psi_z := z(\tilde{\psi} \otimes_S R)$ for all $z \in \text{ann}_R(y)$.
	\end{Constructionoutline}

	We prove the following theorem over the course of the next two sections.
	
	\begin{thm} \label{bigtheorem1}
		Let $S$ be a commutative ring, $(x,y)$ a pair of exact zero divisors in $S$, and define $R := S/(x)$.  Let $(F, d^F)$ be a complex of free $R$-modules.
		Using the notation above, $\varphi$ and $\psi_z$ are degree $-3$ and $-2$ chain maps such that:
		\begin{enumerate}
			\item The chain maps are independent of all choices made, up to homotopy.
			\item The construction is natural, up to homotopy, for maps of chain complexes.
			\item The maps $\varphi$, $\psi_z$ commute, up to homotopy, for all $z \in \text{ann}_R(y)$.
			\item The maps $\psi_z$, $\psi_{z'}$ commute exactly for all $z, z' \in \text{ann}_R(y)$.
			\item $2\varphi^2$ is homotopic to the zero map.
		\end{enumerate} 
	\end{thm}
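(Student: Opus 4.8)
The plan is to treat every map as a homogeneous element of the graded endomorphism algebra of $\tilde F$ (resp.\ of $F$) and to do all bookkeeping with the \emph{graded commutator} $[a,b] := ab - (-1)^{|a||b|}ba$, recording parities: $\tilde d$ and $\tilde\varphi$ are odd (degrees $-1,-3$) while $\tilde\psi$ is even (degree $-2$). The two governing relations are $\tilde d^2 = x\tilde\psi$ and $[\tilde d,\tilde\psi] = y\tilde\varphi$, and I write $\bar{(-)}$ for $-\otimes_S R$, so $\psi := \tilde\psi\otimes_S R$, $\varphi = \tilde\varphi\otimes_S R$, $d^F = \tilde d\otimes_S R$. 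First I would recall why the construction is legitimate: $\tilde d^2$ reduces to $(d^F)^2=0$, so its entries lie in $(x)$ and $\tilde\psi$ exists; then $x[\tilde d,\tilde\psi]=[\tilde d,\tilde d^2]=0$ puts $[\tilde d,\tilde\psi]$ into $\text{ann}_S(x)=(y)$, giving $\tilde\varphi$. Reducing the second relation reads $[d^F,\psi]=\bar y\varphi$; since $z\bar y=0$ for $z\in\text{ann}_R(y)$ this gives $[d^F,\psi_z]=z[d^F,\psi]=z\bar y\varphi=0$, so $\psi_z$ is a degree $-2$ chain map. Likewise $y[\tilde d,\tilde\varphi]=[\tilde d,[\tilde d,\tilde\psi]]=[\tilde d^2,\tilde\psi]=[x\tilde\psi,\tilde\psi]=x[\tilde\psi,\tilde\psi]=0$ forces $[\tilde d,\tilde\varphi]\in(x)$, whence $[d^F,\varphi]=0$ and $\varphi$ is a degree $-3$ chain map.

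The engine for the substantive parts is one further layer of the construction. From $[\tilde d,\tilde\varphi]\in(x)$ I would define an even, degree $-4$ map $\tilde\rho$ by $[\tilde d,\tilde\varphi]=x\tilde\rho$, and set $C:=[\tilde\psi,\tilde\varphi]$. Applying $[\tilde d,-]$ twice to $\tilde\varphi$ and using $\tilde d^2=x\tilde\psi$ gives $[\tilde d,[\tilde d,\tilde\varphi]]=[\tilde d^2,\tilde\varphi]=x[\tilde\psi,\tilde\varphi]=xC$, while the same expression equals $x[\tilde d,\tilde\rho]$; hence $x\bigl(C-[\tilde d,\tilde\rho]\bigr)=0$ and therefore $C=[\tilde d,\tilde\rho]+yU$ for some even degree $-5$ map $U$. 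These relations, together with $z\bar y=0$, are exactly what is needed below.

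Parts (4) and (3) then fall out quickly. For (4), $\psi_z\psi_{z'}=zz'\psi^2=\psi_{z'}\psi_z$ since the scalars are central, so the maps commute on the nose. For (3), $[\varphi,\psi_z]=z[\varphi,\psi]=-z\bar C$; reducing $C=[\tilde d,\tilde\rho]+yU$ gives $z\bar C=z[d^F,\bar\rho]+z\bar y\,\bar U=[d^F,z\bar\rho]$, so $[\varphi,\psi_z]$ is a graded commutator with $d^F$ and hence null-homotopic. For (5) I would expand $2y\tilde\varphi^2=[\tilde\varphi,[\tilde d,\tilde\psi]]$ by the graded Jacobi identity to obtain $2y\tilde\varphi^2=[\tilde d,C]+x[\tilde\rho,\tilde\psi]$, then substitute $C=[\tilde d,\tilde\rho]+yU$; using $[\tilde d,[\tilde d,\tilde\rho]]=x[\tilde\psi,\tilde\rho]$ the two $x$-terms cancel by antisymmetry of the bracket of even maps, leaving $2y\tilde\varphi^2=y[\tilde d,U]$. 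Since $\text{ann}_S(y)=(x)$ this yields $2\tilde\varphi^2\equiv[\tilde d,U]\pmod{(x)}$, and reduction gives $2\varphi^2=[d^F,\bar U]$, null-homotopic. I expect the production of $\tilde\rho$ and the cancellation in (5) to be the main obstacle: the naive identities only control $y\tilde\varphi$, which already reduces to the null-homotopic map $[d^F,\psi]$, so one must descend one commutator further to see both the factor $2$ and the cancellation of the spurious $x$-terms.

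Finally, for (1) and (2) I would compare constructions by exhibiting every difference as a graded commutator with the differential. Changing $\tilde\varphi$ (allowed only modulo $(x)$) leaves $\varphi$ unchanged; changing $\tilde\psi$ by $y\tilde\sigma$ (allowed modulo $(y)$) leaves every $\psi_z$ unchanged and alters $\tilde\varphi$ by $[\tilde d,\tilde\sigma]$, so $\varphi$ changes by $[d^F,\bar\sigma]$. For a different lifting one has $\tilde d'=\tilde d+x\tilde t$; propagating this through the relations gives $\psi'=\psi+[d^F,\bar t]$, so $\psi_z'-\psi_z=[d^F,z\bar t]$, and, after the same substitution trick used for $C$, $\varphi'-\varphi=[d^F,s]$ for an explicit degree $-2$ map $s$. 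Naturality (2) is handled identically: a chain map $F\to F'$ lifts to an $S$-linear map $\tilde F\to\tilde F'$ commuting with the differentials modulo $x$, and the failure of strict commutation, divisible by $x$ and then by $y$ at successive stages, produces the required homotopies. Throughout, the recurring mechanism is that any difference one must kill is visibly of the form $[d^F,-]$ once the surplus factors of $x$ and $y$ are absorbed by the annihilator relations $\text{ann}_S(x)=(y)$, $\text{ann}_S(y)=(x)$ and by $z\bar y=0$.
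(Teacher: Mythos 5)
Your proposal is correct, but for parts (3) and (5) it takes a genuinely different route from the paper. The paper proves one workhorse statement, the naturality proposition (part (2)): for a degree $k$ chain map $f$, $\psi'_z f \simeq f\psi_z$ and $\varphi' f \simeq (-1)^k f\varphi$, established by lifting $f$ and absorbing the failures of commutation first into $(x)$ and then into $(y)$ exactly as you sketch. It then harvests everything else from that single statement: part (1) by taking $f=\mathrm{id}$, part (3) by taking $f=\psi_z$ (degree $-2$, so the sign is $+1$), and part (5) by taking $f=\varphi$ (degree $-3$, giving $\varphi^2\simeq -\varphi^2$, hence $2\varphi^2\simeq 0$) --- no explicit homotopies for (3) and (5) are ever computed. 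You instead push the construction one level deeper, introducing $\tilde\rho$ with $[\tilde d,\tilde\varphi]=x\tilde\rho$ (which the paper also produces, but only to verify that $\varphi$ is a chain map) and then $U$ with $[\tilde\psi,\tilde\varphi]=[\tilde d,\tilde\rho]+yU$, and derive (3) and (5) by direct graded Jacobi computations; I checked the bracket identities $[\tilde d,[\tilde d,\tilde\varphi]]=x[\tilde\psi,\tilde\varphi]$, $[\tilde d,[\tilde d,\tilde\rho]]=x[\tilde\psi,\tilde\rho]$, and the cancellation of the $x$-terms in $2y\tilde\varphi^2=[\tilde d,C]+x[\tilde\rho,\tilde\psi]$, and they all hold. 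What your approach buys is explicit homotopies ($-z\bar\rho$ for (3), $\bar U$ for (5)) rather than mere existence, and it decouples (3) and (5) from naturality; what the paper's approach buys is economy --- one lifting argument instead of several --- and in particular the factor of $2$ in (5) appears for free from the Koszul sign rather than from the expansion of $[\tilde\varphi,[\tilde d,\tilde\psi]]$. Two minor caveats: $U$ has homological degree $-5$ and is therefore \emph{odd}, not even as you wrote (this matters only insofar as $[\tilde d,U]=\tilde dU+U\tilde d$ is the correct form of a null-homotopy for the degree $-6$ map $2\varphi^2$; your computation implicitly uses the correct parity since $yU=C-[\tilde d,\tilde\rho]$ is odd); and your treatment of (2) is only a sketch of the mechanism, whereas it is the longest computation in the paper --- but the mechanism you name is exactly the one the paper executes.
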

	
	\begin{Fact}
		In Proposition \ref{nontrivialopex}, we provide an example in which the chain maps of the theorem above are not homotopic to the zero map.
	\end{Fact}
	
	We call the collection of chain maps above \textit{cohomological operators} of $R$ over $S$.  The following corollary follows immediately from the theorem:
	
	\begin{Cor} \label{EXTeltscor}
		Let $M$ be an $R$ module with free resolution $F_M$.  There exist degree $3$ and $2$ elements of $\text{Ext}_R(M,M)$ arising from the theorem applied to $F_M$ that are independent of all choices made and commute exactly.  If $\dfrac{1}{2} \in S$, these elements generate a graded commutative subring of $\text{Ext}_R(M,M)$.
	\end{Cor}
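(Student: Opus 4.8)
The plan is to pass from the chain level to $\text{Ext}_R(M,M)$ via the standard dictionary between endomorphisms of a free resolution and cohomology classes, and then read off each assertion of the corollary directly from the corresponding part of Theorem \ref{bigtheorem1}. First I would recall that for the free resolution $F_M \to M$, a chain map $F_M \to F_M$ of homological degree $-d$ represents, modulo homotopy, an element of $\text{Ext}_R^d(M,M) \cong H^d\big(\text{Hom}_R(F_M, F_M)\big)$, and that under this identification the Yoneda product on $\text{Ext}_R(M,M)$ is induced by composition of such chain maps. Since Theorem \ref{bigtheorem1} provides that $\varphi$ and $\psi_z$ are chain maps of degrees $-3$ and $-2$, they determine classes $[\varphi] \in \text{Ext}_R^3(M,M)$ and $[\psi_z] \in \text{Ext}_R^2(M,M)$. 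By part (1) of the theorem these chain maps are independent of all the choices in the Construction Outline up to homotopy, so the classes $[\varphi]$ and $[\psi_z]$ are genuinely well defined; this gives the first assertion.

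Next I would establish the commutation. Part (4) gives $\psi_z \circ \psi_{z'} = \psi_{z'} \circ \psi_z$ on the nose, so $[\psi_z][\psi_{z'}] = [\psi_{z'}][\psi_z]$, and part (3) gives $\varphi \circ \psi_z \simeq \psi_z \circ \varphi$, so that $[\varphi][\psi_z] = [\psi_z][\varphi]$ once homotopy is promoted to equality in $\text{Ext}$. Because $[\psi_z]$ is of even degree and $|\varphi|\,|\psi_z| = 6$ is even, the Koszul sign is $+1$ in each case, so all of these generators commute in the ordinary sense; this is the "commute exactly" clause.

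Finally, for the graded-commutative-subring statement I would invoke the elementary fact that the subring generated by a family of pairwise graded-commuting homogeneous elements, in which every odd-degree generator squares to zero, is itself graded commutative. Pairwise graded commutation among $[\varphi]$ and the $[\psi_z]$ has just been verified, the relevant signs all being $+1$ in agreement with the graded sign since the degree products are even. The single remaining condition is that the odd-degree generator $[\varphi]$ square to zero: here part (5) yields $2[\varphi]^2 = 0$, and the hypothesis $\tfrac12 \in S$, hence $\tfrac12 \in R$, lets me cancel the $2$ to obtain $[\varphi]^2 = 0$. Assembling these, the subring of $\text{Ext}_R(M,M)$ generated by $[\varphi]$ and $\{\,[\psi_z] : z \in \text{ann}_R(y)\,\}$ factors through the graded-commutative quotient of the free graded algebra on these generators, and is therefore graded commutative.

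I expect the only genuine obstacle to be the odd-square condition, and it is exactly where the characteristic hypothesis is forced: without $\tfrac12 \in S$ one knows only $2[\varphi]^2 = 0$, which encodes the relation $[\varphi]^2 = -[\varphi]^2$ but not the vanishing axiom $[\varphi]^2 = 0$ demanded of an odd element in a graded commutative ring. Thus the hypothesis $\tfrac12 \in S$ cannot be dropped, and the remaining verification (that composition of representatives induces the Yoneda product and that homotopic chain maps give equal classes) is the routine bookkeeping of sign conventions in $\text{Hom}_R(F_M,F_M)$.
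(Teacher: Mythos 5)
Your proposal is correct and is essentially the argument the paper intends: the paper states the corollary ``follows immediately'' from Theorem \ref{bigtheorem1} without further detail, and your write-up simply makes explicit the standard identification of homotopy classes of degree $-d$ chain maps on $F_M$ with $\text{Ext}_R^d(M,M)$ under composition, then reads off well-definedness, commutation, and the $\frac{1}{2}$-dependent vanishing of $[\varphi]^2$ from parts (1), (3), (4), and (5) exactly as the author would.
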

	
	\begin{Fact}
		We suspect that the actions of $\text{Ext}_R(M,M)$ and $\text{Ext}_R(N,N)$ on $\text{Ext}_R(M,N)$ and $\text{Tor}^R(M,N)$ are compatible up to the appropriate sign, but do not prove that in this paper.
	\end{Fact}
	
	\begin{Fact}
		In the case that $x = y$, we recover a special case of Eisenbud's construction in \cite[\S 1]{Eis}, since in this setting $I/I^2$ is free as an $R$-module.
	\end{Fact}
	
	In section 2, we provide the details of the construction outlined before the theorem and show that $\varphi$ and $\psi_z$ are chain maps.  We provide additional properties of the cohomological operators in the case that $S$ is a graded ring and $x, y$ are homogeneous elements of $S$.  Section 3 contains the proof of Theorem \ref{bigtheorem1}.  Finally, in section 4 we provide an example where $\varphi$ and $\psi_z$ are not homotopic to the zero map.

	\section{Construction of Cohomology Operators}
	
	\begin{defn}  \label{liftingdef}  A \textit{lifting} of a complex $(F,d^F)$ of free $R$-modules is any choice of free $S$-modules and maps $(\tilde{F}, \tilde{d})$ such that $\tilde{F}_j \otimes_S R = F_j$ and $\tilde{d}_j \otimes_S R = d_j$ for all $j$.  The definition does not assume that $\tilde{d}^2 = 0$.
	\end{defn}

	\begin{Construction} \label{operatorsconstruction}
		
		Let $(F,d^F)$ be a homologically indexed complex of free $R$-modules. Choose an arbitrary lifting $(\tilde{F}, \tilde{d})$ of the complex $(F,d^F)$. 
		We can concretely describe the lifting process in the following way for complexes of finitely generated free modules:  for each free $R$-module, replace that module with a free $S$-module of the same rank.  By choosing bases, represent each map $d^F_i$ with a matrix having entries in $R$.  For each of those entries, choose a lifting of them to the ring $S$.  The maps $\tilde{d}_i$ are represented by the matrices with the lifted entries to $S$.
		
		Since $F$ is a complex, $d^F_{i-1} \circ d^F_i =0$ for all $i$, but the same cannot be said for $\tilde{d}$.  Instead, since $\tilde{F} \otimes_S R = F$, the compositions $\tilde{d}_{i-1} \circ \tilde{d}_i \text{ are congruent to } 0 \text{  modulo  } x$.  
		Using that each of the $\tilde{F}_i$ is a free $S$-module, and that the multiplication by $x$ mapping $\tilde{F}_{i-2} \stackrel{\cdot x}{\rightarrow} x \tilde{F}_{i-2}$ is surjective, there exists a (non-unique) map of homological degree $-2$ which we denote by $\tilde{\psi}_i: \tilde{F}_i \rightarrow \tilde{F}_{i-2}$ such that $x\tilde{\psi}_i = \tilde{d}_{i-1} \circ \tilde{d}_{i}.$  
		As before, in the case that each $F_i$ is a finitely generated free module, we can even more explicitly describe the situation:  the entries of the matrices representing $\tilde{d}_{i-1} \circ \tilde{d}_{i}$ must all be divisible by the element $x \in S$, since tensoring with $R$ these compositions become zero.  Therefore, the entries of $\tilde{\psi}_i$ can be chosen to be elements of $S$ which when multiplied by $x$ give the appropriate entries of $\tilde{d}_{i-1} \circ \tilde{d}_{i}$.
		We denote the collection of chosen maps by $\tilde{\psi} := \{\tilde{\psi}_i\}_{i \in \mathbb{Z}}$.  To simplify notation, as is done with the differentials of a complex, we sometimes use $\tilde{\psi}$ to represent a single map in the collection by suppressing subscripts.  We will use the collection $\tilde{\psi}$ to find a family of degree 2 cohomological operators.
		
		Suppressing all sub and superscripts, we have that \[ x(\tilde{d}\tilde{\psi} - \tilde{\psi}\tilde{d})
		= \tilde{d}(x\tilde{\psi}) - (x\tilde{\psi})\tilde{d} 
		= \tilde{d}^ 3 - \tilde{d}^3 = 0. \]
		Therefore, we see that the element $x$ annihilates the map $\tilde{d}\tilde{\psi} - \tilde{\psi}\tilde{d}$.  Using that $\text{ann}_S(x) = (y)$, that each of the $\tilde{F}_i$ is a free $S$-module, and that $\tilde{F}_{i-3} \stackrel{\cdot y}{\rightarrow} y \tilde{F}_{i-3}$ is a surjection, there exists a (non-unique) homological degree $-3$ map $\tilde{\varphi}_i: \tilde{F}_i \rightarrow \tilde{F}_{i-3}$ such that $\tilde{d}\tilde{\psi} - \tilde{\psi}\tilde{d} = y \tilde{\varphi}$ for all appropriate choices of indices.  Again, we can describe the possible choices for the matrix representations of $\tilde{\varphi}$ in the case that $F$ is a complex of finitely generated free modules:  the entries of $\tilde{\varphi}$ are any element of $S$ which when multiplied by $y$ gives the appropriate entry of $\tilde{d}\tilde{\psi} - \tilde{\psi}\tilde{d}$. We denote the collection of chosen maps by $\tilde{\varphi} := \{ \tilde{\varphi}_i \}_{i \in \mathbb{Z}}$ and simplify notation by suppressing subscripts where helpful.  We will use $\tilde{\varphi}$ to find a degree 3 cohomological operator.  
		
	\end{Construction}
	
	Let $z \in \text{ann}_R(y)$.  Define  $\psi_z := z \cdot (\tilde{\psi} \otimes_S R) := \{ z \cdot (\tilde{\psi}_i \otimes_S R) \}_{i \in \mathbb{Z}}$ and $\varphi := \tilde{\varphi} \otimes_S R := \{\tilde{\varphi}_i \otimes_S R \}_{i \in \mathbb{Z}}$.
	
	\begin{Prop}  \label{chainmapprop}
		In the outlined construction above, for any complex of free $R$-modules $F$, the maps $\varphi$ and $\psi_z$ are chain maps for all $z \in \text{ann}_R(y)$.
	\end{Prop}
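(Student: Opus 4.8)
The plan is to verify directly that $\varphi$ and $\psi_z$ commute with the differential $d^F$ on $F$, i.e.\ that $d^F \circ \varphi = \varphi \circ d^F$ and $d^F \circ \psi_z = \psi_z \circ d^F$, working with the lifted maps over $S$ and then reducing modulo $x$.

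First I would handle $\psi_z$. The key relation from the Construction is that $x\tilde{\psi} = \tilde{d}^2$ and that $\tilde{d}\tilde{\psi} - \tilde{\psi}\tilde{d} = y\tilde{\varphi}$. Tensoring the latter identity with $R = S/(x)$, I obtain over $R$ that $d^F(\tilde{\psi}\otimes_S R) - (\tilde{\psi}\otimes_S R)d^F = y(\tilde{\varphi}\otimes_S R) = y\varphi$. Multiplying through by $z \in \text{ann}_R(y)$ kills the right-hand side, since $zy = 0$ in $R$, leaving $d^F\psi_z - \psi_z d^F = 0$. This is exactly the chain-map condition for $\psi_z$, and I expect it to be the cleaner of the two cases.

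Next I would treat $\varphi$, which is the main obstacle. Here I need a relation of the form $\tilde{d}\tilde{\varphi} - \tilde{\varphi}\tilde{d} \equiv 0$ modulo $x$, but the construction only gives me the \emph{defining} equation $y\tilde{\varphi} = \tilde{d}\tilde{\psi} - \tilde{\psi}\tilde{d}$, not its commutator with $\tilde{d}$ directly. The idea is to compute $y(\tilde{d}\tilde{\varphi} - \tilde{\varphi}\tilde{d})$ by substituting the defining equation, exploiting that $y$ is central and commutes past everything:
\[
y(\tilde{d}\tilde{\varphi} - \tilde{\varphi}\tilde{d}) = \tilde{d}(y\tilde{\varphi}) - (y\tilde{\varphi})\tilde{d} = \tilde{d}(\tilde{d}\tilde{\psi} - \tilde{\psi}\tilde{d}) - (\tilde{d}\tilde{\psi} - \tilde{\psi}\tilde{d})\tilde{d}.
\]
Expanding the right-hand side gives $\tilde{d}^2\tilde{\psi} - 2\tilde{d}\tilde{\psi}\tilde{d} + \tilde{\psi}\tilde{d}^2$, and now I substitute $\tilde{d}^2 = x\tilde{\psi}$ wherever it appears to get $x\tilde{\psi}\tilde{\psi} - 2\tilde{d}\tilde{\psi}\tilde{d} + x\tilde{\psi}\tilde{\psi}$. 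The outer terms are divisible by $x$, but the middle term $2\tilde{d}\tilde{\psi}\tilde{d}$ is the delicate one; I would rewrite $\tilde{\psi}\tilde{d}$ using $\tilde{d}\tilde{\psi} - y\tilde{\varphi}$ or otherwise manipulate so that every surviving term is visibly a multiple of $x$, showing $y(\tilde{d}\tilde{\varphi} - \tilde{\varphi}\tilde{d}) \in x\tilde{F}$. From $y(\tilde{d}\tilde{\varphi}-\tilde{\varphi}\tilde{d}) = x(\text{something})$ I can then deduce, after tensoring with $R$, that $y\cdot(d^F\varphi - \varphi d^F) = 0$ over $R$; but since the target relation is that $\varphi$ is an honest chain map over $R$, I must be careful that multiplication by $y$ is injective on the relevant free $R$-modules, or else argue the vanishing of $d^F\varphi - \varphi d^F$ itself rather than merely its product with $y$.

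The hard part will be this last step: controlling the middle term and correctly passing from an equation over $S$ multiplied by $y$ to a genuine chain-map identity over $R$. The cleanest route is likely to establish the $S$-level identity $\tilde{d}\tilde{\varphi} - \tilde{\varphi}\tilde{d} = x\,(\text{explicit map})$ exactly (not merely up to $y$-multiple), by writing the residual terms in terms of $\tilde{\psi}$ and $\tilde{\varphi}$ and using $\text{ann}_S(x) = (y)$ together with surjectivity of multiplication by $x$ as was done to define $\tilde{\psi}$ and $\tilde{\varphi}$ in the first place. Reducing that identity modulo $x$ then yields $d^F\varphi - \varphi d^F = 0$ directly, establishing that $\varphi$ is a chain map.
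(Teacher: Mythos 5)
Your argument for $\psi_z$ is exactly the paper's: tensor $\tilde{d}\tilde{\psi}-\tilde{\psi}\tilde{d}=y\tilde{\varphi}$ down to $R$ and multiply by $z\in\text{ann}_R(y)$. The problem is with $\varphi$: you are aiming at the wrong identity. A homological degree $-3$ chain map must satisfy $d\varphi+\varphi d=0$ (it \emph{anti}commutes with the differential), not $d\varphi=\varphi d$, and this sign is precisely what your ``delicate middle term'' is signalling. If you expand $y(\tilde{d}\tilde{\varphi}-\tilde{\varphi}\tilde{d})$ as you propose, you get $x\tilde{\psi}^2-2\tilde{d}\tilde{\psi}\tilde{d}+x\tilde{\psi}^2$; substituting $\tilde{d}\tilde{\psi}\tilde{d}=\tilde{d}(\tilde{d}\tilde{\psi}-y\tilde{\varphi})=x\tilde{\psi}^2-y\tilde{d}\tilde{\varphi}$ gives $y(\tilde{d}\tilde{\varphi}-\tilde{\varphi}\tilde{d})=2y\tilde{d}\tilde{\varphi}$, i.e.\ $y(\tilde{d}\tilde{\varphi}+\tilde{\varphi}\tilde{d})=0$. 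So the quantity annihilated by $y$ is the anticommutator, not the commutator; the commutator equals $-2\varphi d$ after reduction and has no reason to vanish. The middle term cannot be massaged into a multiple of $x$, because the statement you are trying to reach is false in general.

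The repair is the paper's route: use the graded commutator $[f,g]=fg-(-1)^{|f||g|}gf$ throughout. Then $[\tilde{d},[\tilde{d},\tilde{\psi}]]=[\tilde{d},\,\tilde{d}\tilde{\psi}-\tilde{\psi}\tilde{d}]=\tilde{d}(\tilde{d}\tilde{\psi}-\tilde{\psi}\tilde{d})+(\tilde{d}\tilde{\psi}-\tilde{\psi}\tilde{d})\tilde{d}=x\tilde{\psi}^2-x\tilde{\psi}^2=0$, the cross terms cancelling on the nose, while also $[\tilde{d},[\tilde{d},\tilde{\psi}]]=y[\tilde{d},\tilde{\varphi}]$. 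Your final paragraph then finishes correctly, with one correction: the fact needed to convert ``$y$ annihilates $\tilde{d}\tilde{\varphi}+\tilde{\varphi}\tilde{d}$'' into ``$\tilde{d}\tilde{\varphi}+\tilde{\varphi}\tilde{d}=x\tilde{\rho}$'' is $\text{ann}_S(y)=(x)$ (not $\text{ann}_S(x)=(y)$ as you wrote), applied entrywise on free $S$-modules; tensoring that exact $S$-level identity with $R$ kills the right-hand side and yields $d\varphi+\varphi d=0$. You are right to distrust an argument via injectivity of multiplication by $y$ over $R$ --- that fails, since $\text{ann}_R(y)$ is typically nonzero --- which is exactly why the divisibility must be extracted over $S$ before reducing.
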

	
	\begin{proof}
		Let $z \in \text{ann}_R(y) \subset R$.  Using that $\tilde{d}\tilde{\psi} - \tilde{\psi}\tilde{d} = y \tilde{\varphi}$ and that multiplication by $z$ commutes with all maps, by tensoring with $R$ we have that $d (\psi_z) - (\psi_z) d = zy\varphi = 0$, making $\psi_z$ a degree $-2$ chain map on $F$.
		
		For the following we use the notation $[f, g] := f \circ g - (-1)^{|f||g|} g \circ f$ to simplify computations. 
		
		To see that $\varphi$ is a chain map of degree $-3$, since $\tilde{\psi}$ has even degree, we have: \\
		\[ [\tilde{d},[\tilde{d},\tilde{\psi}]] = [\tilde{d}, y\tilde{\varphi}] = y [\tilde{d}, \tilde{\varphi}]. \]
		On the other hand, we also have \[ [\tilde{d},[\tilde{d},\tilde{\psi}]] = [\tilde{d}, \tilde{d}\tilde{\psi} - \tilde{\psi}\tilde{d}] = \tilde{d}(\tilde{d}\tilde{\psi} - \tilde{\psi}\tilde{d}) + (\tilde{d}\tilde{\psi} - \tilde{\psi}\tilde{d})\tilde{d} = \tilde{d}^2\tilde{\psi} - \tilde{d}\tilde{\psi}\tilde{d} + \tilde{d}\tilde{\psi}\tilde{d} - \tilde{\psi}\tilde{d}^2 = x\tilde{\psi}^2 - x\tilde{\psi}^2 = 0. \]
		Putting these together, we have that $0 =y[\tilde{d},\tilde{\varphi}]$, and so this commutator is annihilated by the element $y$. 
		Therefore, by the same arguments as before, since $\text{ann}_S(y) = (x)$, all of the $S$-modules $\tilde{F}_i$ are free, and the mapping $\tilde{F}_{i-4} \stackrel{\cdot x}{\rightarrow} x \tilde{F}_{i-4}$ is surjective for all $i$, there is a (non-unique) homological degree $-4$ map $\tilde{\rho}$ such that $[\tilde{d}, \tilde{\varphi}] = x\tilde{\rho}$, or $\tilde{d}\tilde{\varphi} + \tilde{\varphi}\tilde{d} = x\tilde{\rho}$. 
		Tensoring with $R$, we have that $d \varphi + \varphi d = 0$, making $\varphi$ a degree $-3$ chain map on $F$. 
		
	\end{proof}

	\subsection{Graded Construction of Cohomology Operators}
	In this subsection, let $S$ be a $\mathbb{Z}$-graded ring.   We restrict our study of exact zero divisors to homogeneous elements $x,y$ and complexes of graded free modules $(F, d^F)$ over the graded ring $R = S/(x)$.  In this setting, we obtain further information about the cohomological operators from the main section.
	
	\begin{defn}  \label{gradedliftingdef}  A \textit{homogeneous lifting} of the complex $(F,d^F)$ is any choice of graded free $S$-modules and internal degree zero maps $(\tilde{F}, \tilde{d})$ such that $\tilde{F}_j \otimes_S R = F_j$ and $\tilde{d}_j \otimes_S R = d_j$ for all $j$.
	\end{defn}
	
	First, we adjust the construction from the main section to fit within the graded setting:
	
	\begin{Constructionoutline}
		Let $S$ be a $\mathbb{Z}$-graded ring, $(x,y)$ a pair of homogeneous exact zero divisors in $S$, and define $R := S/(x)$.  Let $(F, d^F)$ be a complex of graded free $R$-modules.
		\begin{enumerate}
			\item Choose an arbitrary homogeneous lifting of the complex $F$ to $S$, denoted $(\tilde{F}, \tilde{d})$.
			\item For a choice of $(\tilde{F}, \tilde{d})$ there exists a $S$-linear mapping $\tilde{\psi}: \tilde{F} \rightarrow \tilde{F}$ of internal degree $-\text{deg}(x)$ and homological degree $-2$ such that $x\tilde{\psi} = \tilde{d}^2$.
			\item For any given choice of $\tilde{\psi}$, there exists a $S$-linear mapping $\tilde{\varphi}:\tilde{F} \rightarrow \tilde{F}$ of internal degree $-(\text{deg}(x) + \text{deg}(y))$ and homological degree $-3$ such that $\tilde{d} \circ \tilde{\psi} - \tilde{\psi} \circ \tilde{d} = y\tilde{\varphi}$.
		\end{enumerate}

		Define $\varphi := \tilde{\varphi} \otimes R$ and $\psi_z := z(\tilde{\psi} \otimes R)$ for all homogeneous $z \in \text{ann}_R(y)$.
	\end{Constructionoutline}

	\begin{Prop}
		Let $S$, $R$, and $F$ be as defined as above.  Let $z \in \text{ann}_R(y)$ be homogeneous.  Then $\psi_z$ is a chain map of internal degree $\text{deg}(z) + \text{deg}(\psi) = \text{deg}(z) - \text{deg}(x)$ and $\varphi$ is a chain map of internal degree $- (\text{deg}(y) + \text{deg}(x))$.
	\end{Prop}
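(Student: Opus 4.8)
The plan is to treat this as the graded refinement of Proposition~\ref{chainmapprop}: forgetting the grading turns the homogeneous construction into an instance of the ungraded one, so the assertion that $\psi_z$ and $\varphi$ are chain maps is already established, and the only new content is to confirm that each map produced by the construction can be chosen homogeneous and to track the internal degrees.

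First I would record that $\tilde{d}$ has internal degree $0$, which holds by the definition of a homogeneous lifting (Definition~\ref{gradedliftingdef}); consequently $\tilde{d}^2$ is homogeneous of internal degree $0$. The observation driving every step is that multiplication by a homogeneous element is a graded map: for homogeneous $a \in S$, the surjection $\tilde{F}_j \stackrel{\cdot a}{\rightarrow} a\tilde{F}_j$ raises internal degree by $\deg(a)$, so any homogeneous element of $a\tilde{F}_j$ of internal degree $e$ has a homogeneous preimage of internal degree $e - \deg(a)$.

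Applying this with $a = x$ to the defining equation $x\tilde{\psi} = \tilde{d}^2$, and using that the target has internal degree $0$, I can choose each entry of $\tilde{\psi}$ homogeneous of internal degree $-\deg(x)$, so $\tilde{\psi}$ is homogeneous of internal degree $-\deg(x)$. Then $\tilde{d}\tilde{\psi} - \tilde{\psi}\tilde{d}$ is homogeneous of internal degree $-\deg(x)$, since $\tilde{d}$ contributes degree $0$; as it is annihilated by $x$ it lies in $(y)$ entrywise, and applying the same observation with $a = y$ to $\tilde{d}\tilde{\psi} - \tilde{\psi}\tilde{d} = y\tilde{\varphi}$ lets me choose $\tilde{\varphi}$ homogeneous of internal degree $-\deg(x) - \deg(y) = -(\deg(x)+\deg(y))$.

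Finally, tensoring with $R$ preserves internal degree, so $\varphi = \tilde{\varphi} \otimes R$ has internal degree $-(\deg(x)+\deg(y))$, while $\tilde{\psi}\otimes R$ has internal degree $-\deg(x)$; multiplying by the homogeneous element $z$ raises degree by $\deg(z)$, giving $\psi_z$ internal degree $\deg(z) - \deg(x)$, as claimed. I expect no genuine obstacle: the chain-map property is inherited from the ungraded Proposition~\ref{chainmapprop}, and what remains is degree bookkeeping. The one point deserving care is that a homogeneous preimage really can be selected at each stage, which rests entirely on $x$ and $y$ being homogeneous and on the lifting having internal-degree-zero differentials.
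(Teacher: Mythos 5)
Your proposal is correct and follows essentially the same route as the paper: both arguments reduce the chain-map assertion to the ungraded Proposition \ref{chainmapprop} and then read off the internal degrees from the defining equations $x\tilde{\psi} = \tilde{d}^2$ and $y\tilde{\varphi} = \tilde{d}\tilde{\psi} - \tilde{\psi}\tilde{d}$, using that $\tilde{d}$ has internal degree zero. Your added remark that homogeneous preimages exist under multiplication by a homogeneous element is a slightly more careful justification of a step the paper simply asserts, but it is the same argument.
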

	
	\begin{construction/proof of prop}
		Consider a complex of graded free $R$-modules 
		$$ F: \cdots \rightarrow F_{i+1} \stackrel{d_{i+1}}{\rightarrow} F_i \stackrel{d_i}{\rightarrow} F_{i-1} \rightarrow \cdots$$
		where each $d_i$ is an internal degree zero module homomorphism: that is, $\text{deg}(t) = \text{deg}(d_i(t))$ for all homogeneous $t \in F_i$, for all $i \geq 0$.
		
		As in the previous section, we can choose a homogeneous lifting to a sequence of $S$-modules which we denote by $\tilde{F}$ with mappings $\tilde{d}$.  Since $\tilde{d}^2 \equiv 0$ mod $x$, there is a homogeneous homological degree $-2$ map $\tilde{\psi}$ satisfying the equation $x\tilde{\psi} = \tilde{d}^2$.  Since we have chosen $\tilde{d}$ to be an internal degree zero map, we must have that the internal degree of the mapping $\tilde{\psi}$ is $-\text{deg}(x)$.  It follows that we also have that $\text{deg}(\psi) = -\text{deg}(x)$.
		
		Following the same procedure as in the main section, we can also define a homogeneous map $\tilde{\varphi}$ satisfying the equation $y\tilde{\varphi} = \tilde{d}\tilde{\psi} - \tilde{\psi}\tilde{d}$.  This mapping is of homological degree $-3$.  Since the internal degree of $\tilde{d}$ is zero and the internal degree of $\tilde{\psi}$ is $-\text{deg}(x)$, we have that $\text{deg}(y\tilde{\varphi}) = \text{deg}(y) + \text{deg}(\tilde{\varphi}) =  -\text{deg}(x)$, and so as a consequence, $\text{deg}(\varphi) = -(\text{deg}(x) + \text{deg}(y))$. 
		
	\end{construction/proof of prop} 
	
	In the graded examples that we consider in section 5, we will use the internal degrees of these maps to make computations easier.
	
	\section{Properties of Cohomology Operators}
	The cohomological operators we built in the previous section depended upon several choices:  we chose a lifting of the complex $F$, and made choices for both $\tilde{\psi}$ and $\tilde{\varphi}$.  We now show that the cohomological operators $\varphi$ and $\psi_z$ are well-defined up to homotopy for all $z \in \text{ann}_R(y)$.  When considering the actions of the classes of these operators in homology, this makes the action completely well-defined regardless of any choices made.  We additionally will show that any two cohomological operators commute up to homotopy, and that the map $2\varphi^2$ is homotopic to zero.
	
	The following proposition is similar to \cite[1.3]{Eis} for cohomological operators in the case that $I$ is generated by a regular  sequence:
	
	\begin{Prop} \label{naturalityprop}
		Let $(F,d)$ and $(G,d')$ be two complexes of finitely generated free $R$-modules, $f: F \rightarrow G$ a homological degree $k$ morphism of complexes. Let $\tilde{F}$, $\tilde{\psi}$, $\tilde{\varphi}$ be chosen as in section 3 from the complex $F$.  Let $\tilde{G}, \tilde{\psi'}, \tilde{\varphi'}$ be chosen as in section 3 from the complex $G$.  Then $\psi'_z \circ f$ is homotopic to  $f \circ \psi_z$ and $\varphi' \circ f$ is homotopic to  $(-1)^{k} f \circ \varphi$. Moreover, in the graded case, if $f$ is a homogeneous map of complexes, then up to homogeneous homotopy the cohomological operators commute with $f$.
	\end{Prop}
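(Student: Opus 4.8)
The plan is to mirror the construction of the operators themselves, but now applied to a lift of $f$ rather than to a differential. First I would lift $f$ to an $S$-linear map $\tilde{f}\colon \tilde{F} \to \tilde{G}$ of homological degree $k$; this is possible because the $\tilde{F}_i$ are free and $\tilde{G}_i \otimes_S R = G_i$, and I do not require $\tilde{f}$ to be a chain map for $\tilde{d}$. Since $f$ is a morphism of complexes of degree $k$ over $R$, the defect $\tilde{d}'\tilde{f} - (-1)^k \tilde{f}\tilde{d}$ reduces to $d'f - (-1)^k f d = 0$ modulo $x$, so (again using freeness and surjectivity of multiplication by $x$) there is an $S$-linear map $\tilde{\sigma}$ of degree $k-1$ with $x\tilde{\sigma} = \tilde{d}'\tilde{f} - (-1)^k\tilde{f}\tilde{d}$. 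Throughout I extend the bracket notation by $[\tilde{d}, g] := \tilde{d}'g - (-1)^{|g|}g\tilde{d}$ for a map $g\colon \tilde{F} \to \tilde{G}$.

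For the statement about $\psi_z$, I would compute $x(\tilde{\psi}'\tilde{f} - \tilde{f}\tilde{\psi})$ directly. Replacing $x\tilde{\psi}'$ by $(\tilde{d}')^2$, substituting the defect equation for $\tilde{\sigma}$, and using $\tilde{d}^2 = x\tilde{\psi}$, a short calculation gives $x(\tilde{\psi}'\tilde{f} - \tilde{f}\tilde{\psi}) = x[\tilde{d}, \tilde{\sigma}]$. Because $\text{ann}_S(x) = (y)$ and the modules are free, this forces $\tilde{\psi}'\tilde{f} - \tilde{f}\tilde{\psi} = [\tilde{d},\tilde{\sigma}] + y\tilde{\tau}$ for some $S$-linear $\tilde{\tau}$ of degree $k-2$. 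Tensoring with $R$ and multiplying by $z$, the term $y\tilde{\tau}$ dies because $zy = 0$ in $R$, leaving $\psi'_z f - f\psi_z = [d, z\sigma]$ with $\sigma = \tilde{\sigma} \otimes_S R$; since $z$ is a scalar this is exactly a null-homotopy, proving the first assertion.

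For the statement about $\varphi$, the key observation is that the same correction term $\tilde{\tau}$ supplies the homotopy. I would expand $y\tilde{\varphi}'\tilde{f} = (\tilde{d}'\tilde{\psi}' - \tilde{\psi}'\tilde{d}')\tilde{f}$, repeatedly substituting the defect equation, the relation $\tilde{\psi}'\tilde{f} - \tilde{f}\tilde{\psi} = [\tilde{d},\tilde{\sigma}] + y\tilde{\tau}$ just obtained, and the identities $(\tilde{d}')^2 = x\tilde{\psi}'$ and $\tilde{d}^2 = x\tilde{\psi}$. After the $x$-multiple terms cancel in pairs, this collapses to $y\tilde{\varphi}'\tilde{f} = (-1)^k y\tilde{f}\tilde{\varphi} + y[\tilde{d},\tilde{\tau}]$, that is, $y\bigl(\tilde{\varphi}'\tilde{f} - (-1)^k\tilde{f}\tilde{\varphi} - [\tilde{d},\tilde{\tau}]\bigr) = 0$. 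Now using $\text{ann}_S(y) = (x)$ gives $\tilde{\varphi}'\tilde{f} - (-1)^k\tilde{f}\tilde{\varphi} - [\tilde{d},\tilde{\tau}] = x\tilde{\kappa}$, and tensoring with $R$ (where $x \mapsto 0$) yields $\varphi' f - (-1)^k f\varphi = [d,\tau]$ with $\tau = \tilde{\tau}\otimes_S R$, the desired homotopy. In the graded case, if $f$ is homogeneous I would take $\tilde{f}$ homogeneous; then $\tilde{\sigma}$ and $\tilde{\tau}$, and hence the homotopies $z\sigma$ and $\tau$, are homogeneous of the appropriate internal degree.

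The main obstacle is the bookkeeping forced by $x$ and $y$ being zero divisors: one can never cancel $x$ or $y$ outright, only pass from $xA = xB$ to $A - B \in (y)$ (and symmetrically), which is exactly what injects the auxiliary maps $y\tilde{\tau}$ and $x\tilde{\kappa}$. The argument works only because these correction terms are annihilated after tensoring with $R$ --- the $x$-term because $x = 0$ in $R$, the $y$-term because $zy = 0$ in $R$ (with $xy = 0$ keeping the cross-terms in the $\varphi$ computation consistent). Getting the signs right in the degree-$k$ commutator identities, and verifying that all genuinely $x$-divisible terms cancel before the final reduction in the $\varphi$ case, is where the real care is needed.
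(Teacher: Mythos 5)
Your proposal is correct and follows essentially the same route as the paper: lift $f$, extract the defect $\tilde{\sigma}$ with $x\tilde{\sigma}=\tilde{d}'\tilde{f}-(-1)^k\tilde{f}\tilde{d}$ (the paper's $\tilde{\theta}$), derive $\tilde{\psi}'\tilde{f}-\tilde{f}\tilde{\psi}=[\tilde{d},\tilde{\sigma}]+y\tilde{\tau}$, and then reuse the same correction term $\tilde{\tau}$ (the paper's $-\tilde{\vartheta}$) as the homotopy in the $\varphi$ computation. The only differences are notational (your commutator shorthand versus the paper's explicit signs), and your sign conventions check out against the paper's.
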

	
	\begin{proof}
		Since $f: F \rightarrow G$ is a homomorphism of complexes of homological degree $k$, we have  that $d' f - (-1)^k f d = 0$. 
		Choose a sequence of maps $\tilde{\theta}_j: \tilde{F}_j \rightarrow \tilde{G}_{j + k - 1}$ so that $\tilde{d'}\tilde{f} - (-1)^k \tilde{f} \tilde{d} = x \tilde{\theta}$ for all appropriately chosen indices.  
		Rearranging, we have that $\tilde{d'}\tilde{f} = (-1)^k \tilde{f}\tilde{d} + x\tilde{\theta}$, and $\tilde{f}\tilde{d} = (-1)^k \tilde{d'}\tilde{f} - (-1)^k x \tilde{\theta}$, which we will use in future computations.
		We start by showing that $\psi'_z \circ f$ is homotopic to $f \circ \psi_z$.  We have that 
		\begin{align*}
		x\tilde{f}\tilde{\psi} &= \tilde{f}(x\tilde{\psi}) \\ 
		&= \tilde{f}\tilde{d}^2 = [(-1)^k \tilde{d'}\tilde{f} - (-1)^k x \tilde{\theta}]\tilde{d}  \\ 
		&= (-1)^k \tilde{d'}\tilde{f}\tilde{d} - (-1)^k x \tilde{\theta}\tilde{d} \\  
		&= (-1)^k\tilde{d'}[(-1)^k \tilde{d'}\tilde{f} - (-1)^k x \tilde{\theta}] - (-1)^k x\tilde{\theta}\tilde{d} \\ 
		&= \tilde{d'}^2\tilde{f} -x\tilde{d'}\tilde{\theta} - (-1)^k x \tilde{\theta}\tilde{d} \\  
		&= x\tilde{\psi'}\tilde{f} -x\tilde{d'}\tilde{\theta} - (-1)^k x \tilde{\theta}\tilde{d}. \end{align*}

		Rearranging the equality, we have that  $$x(\tilde{f}\tilde{\psi} - \tilde{\psi'}\tilde{f} + \tilde{d'}\tilde{\theta} + (-1)^k \tilde{\theta}\tilde{d}) = 0,$$ so following the same procedure as in the proof of Proposition \ref{chainmapprop}, there is a degree $k-2$ map $\vartheta$ such that $$\tilde{f}\tilde{\psi} - \tilde{\psi'}\tilde{f} + \tilde{d'}\tilde{\theta} + (-1)^k \tilde{\theta}\tilde{d} = y\tilde{\vartheta}.$$
		Therefore, we have that $$\tilde{f}\tilde{\psi} = \tilde{\psi'}\tilde{f} + \tilde{d'}(-\tilde{\theta}) - (-1)^{k-1}(-\tilde{\theta})\tilde{d} + y\tilde{\vartheta}.$$ 
		Alternatively, we can write $$\tilde{\psi'}\tilde{f} = \tilde{f}\tilde{\psi} + \tilde{d'}\tilde{\theta} + (-1)^k\tilde{\theta}\tilde{d} - y\tilde{\vartheta}.$$
		We will need both forms of this equation for future computations as well.
		Tensoring this equality with $R$, we have that $$f\psi - \psi'f = d'(-\theta) - (-1)^{k-1}(-\theta)d + y\vartheta.$$  Let $z \in \text{ann}_R(y)$.  
		Multiplying by $z$, we get that $$f(\psi_z) - (\psi'_z)f = d'(-z\theta) - (-1)^{k-1}(-z\theta)d,$$ and thus $f \circ (\psi_z)$ is homotopic to $(\psi'_z) \circ f$.
		
		To show that $\varphi' \circ f$ is homotopic to $(-1)^k f \circ \varphi$, consider the following:
		\begin{align*}
		y\tilde{\varphi'}\tilde{f} &= [\tilde{d'}\tilde{\psi'} - \tilde{\psi'}\tilde{d'}]\tilde{f} \\
		&= \tilde{d'}\tilde{\psi'}\tilde{f} - \tilde{\psi'}[x\tilde{\theta} + (-1)^k \tilde{f}\tilde{d}] \\
		&= \tilde{d'}[\tilde{f}\tilde{\psi} + \tilde{d'}\tilde{\theta} + (-1)^k\tilde{\theta}\tilde{d} - y\tilde{\vartheta}] - x\tilde{\psi'}\tilde{\theta} - (-1)^k[\tilde{f}\tilde{\psi} + \tilde{d'}\tilde{\theta} + (-1)^k\tilde{\theta}\tilde{d} - y\tilde{\vartheta}]\tilde{d} \\
		&= [x\tilde{\theta} + (-1)^k \tilde{f}\tilde{d}]\tilde{\psi} + x\tilde{\psi'}\tilde{\theta} - y \tilde{d'}\tilde{\vartheta} - x\tilde{\psi'}\tilde{\theta} - (-1)^k \tilde{f}\tilde{\psi}\tilde{d}-x\tilde{\theta}\tilde{\psi} + (-1)^k y \tilde{\vartheta}\tilde{d} \\
		&= (-1)^k[\tilde{f}(\tilde{d}\tilde{\psi} - \tilde{\psi}\tilde{d})] - y\tilde{d'}\tilde{\vartheta} + (-1)^ky\tilde{\vartheta}\tilde{d} \\
		&= (-1)^k\tilde{f}(y\tilde{\varphi}) - y\tilde{d'}\tilde{\vartheta} + (-1)^ky\tilde{\vartheta}\tilde{d}
		\end{align*}
		
		By rearranging, we have that $$y[\tilde{\varphi'}\tilde{f} - (-1)^k \tilde{f}\tilde{\varphi} + \tilde{d'}\tilde{\vartheta} - (-1)^k\tilde{\vartheta}\tilde{d}] = 0,$$ and so by the same technique as in the proof of Proposition \ref{chainmapprop} $$\tilde{\varphi'}\tilde{f} - (-1)^k \tilde{f}\tilde{\varphi} + \tilde{d'}\tilde{\vartheta} - (-1)^k\tilde{\vartheta}\tilde{d} = x\tilde{\Upsilon}$$ for some degree $k-3$ mapping $\tilde{\Upsilon}$.
		Tensoring with $R$, we obtain the equation $$\varphi'f - (-1)^k f \varphi = d'(-\vartheta) - (-1)^{k-2} (-\vartheta)d.$$
		
		This shows that $\varphi' \circ f$ is homotopic to $(-1)^k f \circ \varphi$, as wanted.
		
		By assuming that $f$ is a homogeneous map, following the same arguments provided above one can see that the homotopies $\theta$ and $\vartheta$ are also homogeneous.
		
	\end{proof}
	
	\begin{Cor} \label{well-definedcor}
		Let $z \in \text{ann}_R(y)$.  The cohomological operators $\psi_z$ and $\varphi$ are independent of all choices made, up to homotopy.  In the graded case, for homogeneous $z \in \text{ann}_R(y)$, the cohomological operators are independent of all choices made up to homogeneous homotopy.
	\end{Cor}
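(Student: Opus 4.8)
The plan is to deduce this corollary directly from the naturality statement of Proposition \ref{naturalityprop}, specialized to the identity morphism. To interpret ``independent of all choices,'' suppose the construction of the previous section is run twice on the \emph{same} complex $F$. The first run produces a lifting $(\tilde F, \tilde d)$ together with choices of $\tilde\psi$ and $\tilde\varphi$, hence operators $\psi_z$ and $\varphi$; a second run produces a possibly different lifting $(\tilde G, \tilde d')$ with choices $\tilde\psi'$ and $\tilde\varphi'$, hence operators $\psi'_z$ and $\varphi'$. Because any two liftings of a fixed $F$ have the same rank in each homological degree, these two runs are exactly the data to which Proposition \ref{naturalityprop} applies, taking $G = F$ and $f = \mathrm{id}_F$, a morphism of homological degree $k = 0$.

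First I would fix a lift $\tilde f$ of $\mathrm{id}_F$ to the $S$-modules: since $\tilde F_j$ and $\tilde G_j$ are free $S$-modules of equal rank (both reducing to $F_j$ modulo $x$), in suitable bases $\tilde f_j$ may be taken to be any matrix congruent to the identity modulo $x$, so such a lift exists. With $k = 0$ the chain-map hypothesis $d' f - (-1)^k f d = d - d = 0$ of the proposition holds automatically. Applying the proposition then gives that $\psi'_z \circ \mathrm{id}_F$ is homotopic to $\mathrm{id}_F \circ \psi_z$ and that $\varphi' \circ \mathrm{id}_F$ is homotopic to $(-1)^{0}\,\mathrm{id}_F \circ \varphi$; that is, $\psi'_z$ is homotopic to $\psi_z$ and $\varphi'$ is homotopic to $\varphi$. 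As the second run represents an arbitrary alternative set of choices, transitivity of homotopy equivalence shows that $\psi_z$ and $\varphi$ are well-defined up to homotopy.

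For the graded statement I would note that $\mathrm{id}_F$ is a homogeneous morphism of internal degree zero, so the final sentence of Proposition \ref{naturalityprop} ensures the witnessing homotopies are themselves homogeneous; thus for homogeneous $z \in \mathrm{ann}_R(y)$ the operators are independent of all choices up to homogeneous homotopy.

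The one point requiring care is that Proposition \ref{naturalityprop} must absorb every layer of choice at once. This it does: the second lifting $\tilde G$ and the secondary maps $\tilde\psi'$ and $\tilde\varphi'$ are arbitrary outputs of the construction applied to $F$, so comparing them against the first run through the identity morphism simultaneously accounts for the freedom in the lifting and in both $\tilde\psi$ and $\tilde\varphi$. I anticipate no real obstacle beyond exhibiting a lift of the identity and invoking transitivity to pass between any two sets of choices.
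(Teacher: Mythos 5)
Your proposal is correct and follows exactly the paper's own argument: apply Proposition \ref{naturalityprop} with $G = F$ and $f = \mathrm{id}_F$ (so $k=0$), letting the second run of the construction carry arbitrary alternative choices, with the graded case handled by the homogeneous part of that proposition. The extra remarks on lifting the identity and on absorbing all layers of choice at once are sensible elaborations of the same proof.
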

	
	\begin{proof}
		In Proposition \ref{naturalityprop}, take $G = F$ and let $f = id^F$.  This shows that for any two choices of liftings $\tilde{F}, \tilde{F}'$ and choices of maps $\tilde{\psi}, \tilde{\psi}', \tilde{\varphi}, \text{ and } \tilde{\varphi}'$, the map $\psi_z$ is homotopic to $\psi'_z$ and $\varphi$ is homotopic to $\varphi'$.  The graded case follows immediately from the graded cases of \ref{naturalityprop} as well.
	\end{proof} 
	
	\begin{Cor} \label{commutationcor}
		The cohomological operators constructed in section three commute with each other up to homotopy.  That is, for any elements $z, z' \in \text{ann}_R(y)$ we have that $ \psi_z \circ \psi_{z'} = \psi_{z'} \circ  \psi_z$ and $ \psi_z \circ \varphi$ is homotopic to $\varphi \circ  \psi_z$.  Furthermore, $2 \varphi^2$ is homotopic to 0.  In the graded case, for $z, z'$ homogeneous, these operators commute up to homogeneous homotopy.
	\end{Cor}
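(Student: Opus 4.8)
The plan is to deduce all three commutation statements from the naturality of the construction (Proposition \ref{naturalityprop}) together with its well-definedness up to homotopy (Corollary \ref{well-definedcor}), exploiting the observation that $\psi_z$ and $\varphi$ are \emph{themselves} chain maps (Proposition \ref{chainmapprop}) and may therefore be substituted into Proposition \ref{naturalityprop} as the morphism $f$. This avoids any fresh lifting-and-Jacobi computation.

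First I would dispose of the exact equality $\psi_z \circ \psi_{z'} = \psi_{z'} \circ \psi_z$, which needs no homotopy at all. Writing $\psi := \tilde{\psi} \otimes_S R$, both operators are scalar multiples of the \emph{single} chain map $\psi$ coming from one fixed lifting, namely $\psi_z = z\psi$ and $\psi_{z'} = z'\psi$. Since $z, z'$ act as central scalars of degree $0$ and $R$ is commutative, one gets $\psi_z \circ \psi_{z'} = zz'\,\psi^2 = z'z\,\psi^2 = \psi_{z'}\circ\psi_z$, with no Koszul sign because $z'$ has degree $0$. This is the only item asserted as an exact equality, and it holds for any choice of lifting.

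Next, for $\psi_z \circ \varphi \simeq \varphi \circ \psi_z$, I would apply Proposition \ref{naturalityprop} with $G = F$ and with the morphism taken to be $f = \varphi$, a chain map of homological degree $k = -3$. Its $\psi$-clause reads $\psi'_z \circ f \simeq f \circ \psi_z$, i.e. $\psi'_z \circ \varphi \simeq \varphi \circ \psi_z$, where $\psi'_z$ is the degree $-2$ operator built on $G = F$. Choosing the same lifting and auxiliary maps for $G$ as for $F$ gives $\psi'_z = \psi_z$ on the nose (or invoke Corollary \ref{well-definedcor} for $\psi'_z \simeq \psi_z$), and the desired homotopy follows. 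Concretely, the lift of $f = \varphi$ may be taken to be $\tilde{\varphi}$, so the auxiliary map $\tilde{\theta}$ in the proof of Proposition \ref{naturalityprop} is exactly the map $\tilde{\rho}$ produced in Proposition \ref{chainmapprop}; this makes the naturality hypotheses literally satisfied rather than merely cited.

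Finally, for $2\varphi^2 \simeq 0$, I would again apply Proposition \ref{naturalityprop} with $G = F$ and $f = \varphi$, now reading off its $\varphi$-clause $\varphi' \circ f \simeq (-1)^k f \circ \varphi$. With $k = -3$ this yields $\varphi' \circ \varphi \simeq (-1)^{-3}\varphi\circ\varphi = -\varphi^2$, and since $\varphi' = \varphi$ for the same choices (or $\varphi' \simeq \varphi$ by Corollary \ref{well-definedcor}) we obtain $\varphi^2 \simeq -\varphi^2$, that is $2\varphi^2 \simeq 0$. The graded refinements are immediate: when $z$ and $f$ are homogeneous, Proposition \ref{naturalityprop} produces homogeneous homotopies, and the operators are homogeneous in the graded construction. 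The step I expect to require the most care is the sign bookkeeping in this last part: the whole point is that the factor $(-1)^{-3} = -1$ forces $2\varphi^2 \simeq 0$ rather than a vacuous $0 \simeq 0$, and one must resist concluding the stronger $\varphi^2 \simeq 0$ — precisely why Corollary \ref{EXTeltscor} must assume $\tfrac{1}{2} \in S$ to extract a genuinely graded-commutative subring. I would also double-check that substituting an operator for $f$ creates no circularity, which it does not, since Proposition \ref{naturalityprop} uses only that $f$ is a chain map, established independently in Proposition \ref{chainmapprop}.
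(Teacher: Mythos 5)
Your proposal is correct and takes essentially the same approach as the paper: the exact commutation of the $\psi$'s is the same scalar computation $zz'\psi^2 = z'z\psi^2$, and the remaining two claims are obtained by substituting one of the operators for $f$ in Proposition~\ref{naturalityprop} with $G = F$ (the paper takes $f = \psi_z$ and reads off the $\varphi$-clause for the middle claim, whereas you take $f = \varphi$ and read off the $\psi$-clause --- an immaterial difference). Your sign bookkeeping for $2\varphi^2 \simeq 0$ is exactly the paper's argument.
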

	
	\begin{proof}
		It is clear that any two of the degree 2 cohomological operators commute directly with one another, since $\psi_z \circ \psi_{z'} = z \cdot z' \psi^2 = z' \cdot z \psi^2 =  \psi_{z'} \circ \psi_z.$
		For the remaining two facts we again apply Proposition \ref{naturalityprop}. Take $G = F$ and let $f = \psi_{z}$.  Then we have that $\psi_{z} \circ \varphi$ is homotopic to $\varphi \circ \psi_z$.  Likewise, if we take $G = F$ and let $f = \varphi$, then we have that $\varphi \circ \varphi + \varphi \circ \varphi = 2 \varphi^2$ is homotopic to 0.
	\end{proof}
	
	\begin{Cor}
		Let $M$ be an $R$-module, and let $F_M$, $F'_M$ be two free resolutions of $M$.  Let $z \in \text{ann}_R(y)$, $\text{cls}(\psi_z)$ and $\text{cls}(\varphi)$ be the classes in homology of the cohomological operators constructed from $F_M$, and $\text{cls}(\psi'_z), \text{cls}(\varphi')$ be the classes in homology of the cohomological operators constructed from $F'_M$.  Then $\text{cls}(\psi_z) = \text{cls}(\psi'_z)$ and $\text{cls}(\varphi) = \text{cls}(\varphi')$ in $\text{Ext}_R(M,M)$.
	\end{Cor}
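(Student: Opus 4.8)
The plan is to reduce the statement to the naturality result of Proposition \ref{naturalityprop} together with the comparison theorem for free resolutions. First I would recall that since $F_M$ and $F'_M$ are both free resolutions of the same module $M$, the comparison theorem produces chain maps $f \colon F_M \to F'_M$ and $g \colon F'_M \to F_M$ of homological degree $0$, each lifting $\text{id}_M$, which are mutually inverse homotopy equivalences: $g \circ f \simeq \text{id}_{F_M}$ and $f \circ g \simeq \text{id}_{F'_M}$. These maps induce the canonical isomorphisms $H^{\ast}(\text{Hom}_R(F_M,F_M)) \cong \text{Ext}_R(M,M) \cong H^{\ast}(\text{Hom}_R(F'_M,F'_M))$, under which the class of a homological degree $-d$ self-map $\alpha$ of $F_M$ is carried to the class of the conjugate $f \circ \alpha \circ g$ on $F'_M$; showing $\text{cls}(\psi_z) = \text{cls}(\psi'_z)$ and $\text{cls}(\varphi) = \text{cls}(\varphi')$ amounts to checking this conjugation sends one representative to the other.

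Next I would apply Proposition \ref{naturalityprop} to the map $f$ with $k = 0$. Because $k = 0$ every intervening sign $(-1)^k$ equals $+1$, so the proposition yields directly that $\psi'_z \circ f$ is homotopic to $f \circ \psi_z$ and that $\varphi' \circ f$ is homotopic to $f \circ \varphi$. This is the sign-free degenerate instance of naturality and requires no work beyond quoting the proposition.

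The final step is to compute the transported classes. Using that homotopic maps represent the same class and that composition descends to homology, I would compute
\[
f \circ \psi_z \circ g \;\simeq\; \psi'_z \circ f \circ g \;\simeq\; \psi'_z \circ \text{id}_{F'_M} \;=\; \psi'_z,
\]
and likewise $f \circ \varphi \circ g \simeq \varphi' \circ f \circ g \simeq \varphi'$. Passing to homology, this shows the canonical isomorphism above sends $\text{cls}(\psi_z)$ to $\text{cls}(\psi'_z)$ and $\text{cls}(\varphi)$ to $\text{cls}(\varphi')$, so the two resolutions determine the same elements of $\text{Ext}_R(M,M)$.

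I expect the only real subtlety to be bookkeeping: fixing the convention for the canonical identification of each self-Hom cohomology with $\text{Ext}_R(M,M)$ and confirming that transport is given by the conjugation $\alpha \mapsto f \circ \alpha \circ g$. Since the comparison maps have homological degree $0$, the sign complications that make the general naturality proposition delicate vanish entirely, so this should be the most routine of the corollaries, with all the content supplied by Proposition \ref{naturalityprop}. One caveat to flag is that invoking that proposition presumes the resolutions are by finitely generated free modules, matching its stated hypotheses.
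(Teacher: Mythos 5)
Your proposal is correct and follows essentially the same route as the paper: choose mutually inverse homotopy equivalences $f$ and $g$ between the resolutions, identify the two Hom-complex cohomologies via conjugation, and apply Proposition \ref{naturalityprop} (in the degree-zero, sign-free case) to transport $\text{cls}(\psi_z)$ to $\text{cls}(\psi'_z)$ and $\text{cls}(\varphi)$ to $\text{cls}(\varphi')$. The only differences from the paper's proof are cosmetic (the direction of the conjugation and your explicit remark about the finite-generation hypothesis in the naturality proposition).
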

	
	\begin{proof}
		Let $f: F_M \rightarrow F'_M$ be a homotopy equivalence between the two free resolutions of $M$.  Then there exists a second homotopy equivalence $g: F'_M \rightarrow F_M$ such that $f \circ g$ is homotopic to the identity on $F'_M$ and $g \circ f$ is homotopic to the identity on $F_M$.  We can compute $\text{Ext}_R(M,M)$ through $\text{H}^{\bullet}\text{Hom}_R(F_M, F_M)$ or through $\text{H}^{\bullet}\text{Hom}_R(F'_M, F'_M)$.  The isomorphism between these sends $\text{cls}(\alpha)$ to $\text{cls}(g \circ \alpha \circ f)$ for all $\alpha \in \text{Hom}_R(F'_M, F'_M)$.  We know by Proposition \ref{naturalityprop} that $\psi'_z \circ f$ is homotopic to $ f \circ \psi_z$ and $\varphi' \circ f$ is homotopic to $f \circ \varphi$.  By composing with $g$ on the left, in homology, we have that $\text{cls}(\psi_z)$ is equal to $\text{cls}(g \circ \psi'_z \circ f)$ and $\text{cls}(\varphi)$ is equal to $\text{cls}(g \circ \varphi' \circ f)$.  Therefore, under the isomorphism of $\text{Ext}_R(M,M)$, we identify $\text{cls}(\psi_z)$ and $\text{cls}(\psi'_z)$ and identify $\text{cls}(\varphi)$ with $\text{cls}(\varphi')$.
	\end{proof}
	
	We end the section by noting that the contents of Theorem \ref{bigtheorem1} have now been proven:  Proposition \ref{chainmapprop} shows us that for all $z \in \text{ann}_R(y)$, the maps $\psi_z$ and $\phi$ are chain maps.  Part 2 of the theorem is shown in Proposition \ref{naturalityprop}.  Part 1 is shown in Corollary \ref{well-definedcor}, and parts 3, 4, and 5 are shown in Corollary \ref{commutationcor}.
	
	\section{An Example of Non-Vanishing Cohomology Operators}
	In this section, we use the graded cohomological operators from section 3 to produce an example where some of the cohomological operators constructed are not homotopic to zero.
	
	\begin{Prop} \label{nontrivialopex}
		Let $S = \mathbb{Q}[x, y, z, w, t]/(x^4, y^4, w^4, z^4, x^2y^2, y^2w^2, z^2w^2, xt, zt, wt)$.
		The elements $f = x^2 + y^2 + z^2 + w^2$ and $g = x^2 + y^2 - z^2 - w^2$ form an exact pair of zero divisors in $S$.  Put $R = S/(f)$.  Then $t \in \text{ann}_R(g)$, and the operators $\varphi$ and $\psi_{t}$ are not homotopic to the zero map on the minimal $R$-free resolution of $M = R/(y)$.  In other words, the elements $\text{cls}(\varphi)$ and $\text{cls}(\psi_t)$ of $\text{Ext}_R(M,M)$ are non-zero.
	\end{Prop}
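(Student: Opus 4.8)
The plan is to handle the ring-theoretic assertions quickly and then devote the bulk of the work to the non-vanishing of $\psi_t$ and $\varphi$ on the minimal resolution of $M=R/(y)$.

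For the exact-pair claim, set $A=x^2+y^2$ and $B=z^2+w^2$, so $f=A+B$ and $g=A-B$. Then $A^2=x^4+2x^2y^2+y^4=0$ and $B^2=z^4+2z^2w^2+w^4=0$ in $S$, so $fg=A^2-B^2=0$; hence $g\in\text{ann}_S(f)$ and $f\in\text{ann}_S(g)$. To promote these inclusions to equalities I would compute $\text{ann}_S(f)$ on the explicit monomial $\mathbb{Q}$-basis $x^ay^bz^cw^dt^e$ of $S$ (recall the relations force $a,b,c,d\le 3$, kill the three pairs $x^2y^2,y^2w^2,z^2w^2$, and impose $e\ge 1\Rightarrow a=c=d=0$). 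Because everything is graded, this is a finite linear-algebra check in each internal degree, yielding $\text{ann}_S(f)=(g)$; the symmetric equality $\text{ann}_S(g)=(f)$ then follows by the $\mathbb{Q}(i)$-automorphism $z\mapsto iz,\ w\mapsto iw$, which preserves the defining ideal and interchanges $f$ and $g$. For the second claim, from $xt=zt=wt=0$ I get $x^2t=z^2t=w^2t=0$, so $tf=ty^2=tg$; since $tf\in(f)$ this shows $tg\in(f)$, that is, $t\in\text{ann}_R(g)$.

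The heart of the proof is to run the Construction of Section 2 explicitly. First I would build the minimal graded $R$-free resolution $(F,d)$ of $M=R/(y)$ far enough to define the operators in the relevant homological degrees: compute $\text{ann}_R(y)=((f):y)/(f)$ from the monomial description, then its iterated syzygies, recording the internal degrees of the generators of each $F_n$ (these strictly increase with $n$ by minimality). Choosing a homogeneous lifting $(\tilde F,\tilde d)$ to $S$, I would solve $f\tilde\psi=\tilde d^2$ for a homogeneous $\tilde\psi$ of homological degree $-2$ and internal degree $-\deg(f)=-2$, and then solve $\tilde d\tilde\psi-\tilde\psi\tilde d=g\tilde\varphi$ for a homogeneous $\tilde\varphi$ of homological degree $-3$ and internal degree $-4$, exactly as in the graded construction. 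Reducing modulo $f$ and multiplying by $t$ produces the chain maps $\varphi=\tilde\varphi\otimes_S R$ of internal degree $-4$ and $\psi_t=t(\tilde\psi\otimes_S R)$ of internal degree $\deg(t)-\deg(f)=-1$, which are chain maps by Proposition \ref{chainmapprop}.

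It remains to show these are not null-homotopic. For $\varphi$ I would first test reduction to the residue field $k=R/\mathfrak{m}$: since $F$ is minimal its differentials vanish mod $\mathfrak{m}$, so any null-homotopy forces $\varphi\otimes_R k=0$; thus if the matrix of $\tilde\varphi$ has an entry that is a unit mod $\mathfrak{m}$ (possible because its internal degree $-4$ can match a gap of $4$ between generators three homological steps apart), then $\varphi\otimes_R k\ne 0$ and we are done. The genuine obstacle is $\psi_t$: because $t\in\mathfrak{m}$ we have $\psi_t\otimes_R k=0$, so reduction to $k$ is blind to it and $\text{cls}(\psi_t)$ lies in the kernel of $\text{Ext}^2_R(M,M)\to\text{Ext}^2_R(M,k)$. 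Here I would use the internal grading: a null-homotopy may be replaced by its internal-degree-$(-1)$ homogeneous component, so it suffices to rule out a homogeneous $h$ of homological degree $-1$ and internal degree $-1$ with $\psi_t=dh+hd$. Since $R$ is a finitely generated graded $\mathbb{Q}$-algebra, each $\text{Hom}_R(F_n,F_{n-1})_{-1}$ is finite-dimensional, so this becomes a finite system of $\mathbb{Q}$-linear equations, which I expect to localize to the first homological level where $\psi_t$ is nonzero. Writing out this system from the explicit matrices and showing it is inconsistent is the main computational hurdle --- and the step most likely to need care or a short machine verification --- after which $\text{cls}(\psi_t)\ne 0$ and, together with the $\varphi$ computation, the proposition follows.
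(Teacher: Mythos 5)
Your overall framework matches the paper's: build the minimal graded resolution of $R/(y)$ explicitly, lift it to $S$, solve for $\tilde\psi$ and $\tilde\varphi$, and then rule out a null-homotopy by restricting to homogeneous homotopies of the forced internal degree and showing the resulting finite $\mathbb{Q}$-linear system is inconsistent. Your preliminary observations are sound and in places cleaner than the paper's (which simply cites Macaulay2): the factorization $fg=(x^2+y^2)^2-(z^2+w^2)^2=0$, the $\mathbb{Q}(i)$-automorphism $z\mapsto iz$, $w\mapsto iw$ swapping $f$ and $g$, and the direct verification that $tg=ty^2=tf\in(f)$.

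There are, however, two genuine gaps. First, your proposed shortcut for $\varphi$ --- reduce modulo $\mathfrak{m}$ and look for a unit entry --- fails in this example. The operator actually computed in the paper is $\varphi_3=[0\ \cdots\ 0\ t\ 0\ \cdots\ 0\ w^2\ 0\ y^2+z^2\ -y^2]$: the four entries out of the $R(-4)^4$ summand (the only ones whose internal degree would permit a unit) all vanish, so $\varphi\otimes_R k=0$ in the computed range and the residue-field test is blind to $\varphi$ exactly as it is to $\psi_t$. You state no fallback for $\varphi$, yet you need one: the paper runs the same homogeneous-homotopy linear algebra for $\varphi$ as for $\psi_t$, deriving a contradiction from the entry equation $ya_{16}+z^2=-y^2$, equivalently $ya_{16}=x^2+w^2$, which is unsolvable against the monomial basis of $R_2$. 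Second, the entire computational core --- the matrices $d_2,d_3$, the choices of $\tilde\psi_2,\tilde\psi_3,\tilde\varphi_3$, and the inconsistency of the two linear systems --- is deferred rather than carried out; for a proposition whose content is precisely these computations, that deferral is the proof. Your degree bookkeeping for $\psi_t$ (internal degree $\deg(t)-\deg(f)=-1$, so a null-homotopy may be taken homogeneous of that degree, leaving finitely many unknowns) is exactly what the paper does, but you must still exhibit the obstruction, e.g.\ that $k_1yt+yl_1=t^2$ has no solution because $t^2$ does not lie in $yR_1$.
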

	
	The proof of Proposition \ref{nontrivialopex} will make up the remainder of this section.
	
	Using computer algebra software such as Macaulay2, one can verify that $\text{ann}_S(f) = (g)$ and $\text{ann}_S(g) = (f)$, making the elements an exact pair of zero divisors.
	
	To see that the cohomological operators developed in the previous sections act non-trivially, we resolve the $R$-module $R/(y)$.  
	The start of the minimal graded free resolution of $R/(y)$ is given by:
	$$ \cdots \rightarrow R(-4)^4 \oplus R(-5)^6 \oplus R(-6)^6 \stackrel{d_3}{\rightarrow} R(-3) \oplus R(-4)^3 \stackrel{d_2}{\rightarrow} R(-1) \stackrel{d_1}{\rightarrow} R \rightarrow 0$$
	where the differentials can be expressed using the following matrices:
	$$d_1 = [y], d_2 = [yt \quad yw^2 \quad yz^2 \quad y^3] $$
	
	$d_3 = 
	\begin{bmatrix}
	w & z & y & x & 0 & 0 & 0 & 0 & 0 & 0 & 0 & 0 & 0 & 0 & 0 & 0 \\
	0 & 0 & 0 & 0 & t & 0 & 0 & y & 0 & 0 & w^2 & z^2 & 0 & 0 & 0 & 0 \\
	0 & 0 & 0 & 0 & 0 & t & 0 & 0 & y & 0 & 0 & 0 & w^2 & 0 & z^2 & 0 \\
	0 & 0 & 0 & 0 & 0 & 0 & t & 0 & 0 & y & 0 & 0 & 0 & w^2 & 0 & z^2
	\end{bmatrix}
	$
	
	We start by lifting the $R$- free resolution to a sequence of $S$-module maps.  Since the operators are independent of the choice of lifting, take $\tilde{d}_1, \tilde{d}_2, \tilde{d}_3$ to be the same matrices as $d_1, d_2, d_3$, with elements regarded as being in $S$ instead of in $R$.  
	
	We have that 
	$$ \tilde{d}_1 \circ \tilde{d}_2 = [y^2t \quad 0 \quad y^2z^2 \quad 0]. $$  The construction from section 1 of the chapter tells us that the entries of this matrix should be divisible by the element $f = x^2 + y^2 + z^2 + w^2$, and the other factor forms one choice of entries for the matrix $\tilde{\psi}_2$.  Using the relations in the ring $S$, we see that $$\tilde{\psi}_2 = [t \quad 0 \quad z^2 - x^2 + w^2 \quad 0]$$ is one such choice, and that $\tilde{d}_1 \circ \tilde{d}_2 = f \tilde{\psi}_2$, as wanted.  
	
	We also have that $$\tilde{d}_2 \circ \tilde{d}_3 = [0 \quad 0 \quad y^2t \quad 0 \quad 0 \quad 0 \quad y^3t \quad 0 \quad y^2z^2 \quad 0 \quad 0 \quad 0 \quad 0 \quad 0 \quad 0 \quad y^3z^2].$$  Using the same arguments as for the previously considered composition, we have
	$$ \tilde{\psi}_3 = [0 \quad 0 \quad t \quad 0 \quad 0 \quad 0 \quad yt \quad 0 \quad z^2 - x^2 + w^2 \quad 0 \quad 0 \quad 0 \quad 0 \quad 0 \quad 0 \quad y(z^2 - x^2 + w^2)]. $$
	
	Using the now computed first two non-zero maps in $\tilde{\psi}$, we can compute the first non-zero map in $\tilde{\varphi}$.
	
	The results from section 3 guarantee that the difference $\tilde{d}_1 \circ \tilde{\psi}_3 - \tilde{\psi}_2 \circ \tilde{d}_3$ must be represented by a matrix in which every entry is divisible by the element $g = x^2 + y^2 - z^2 - w^2$.  
	
	First, by direct computation, we have that $\tilde{d}_1 \circ \tilde{\psi}_3 - \tilde{\psi}_2 \circ \tilde{d}_3$ is represented by the matrix  
	$$[0 \quad 0 \quad 0 \quad 0 \quad 0 \quad 0 \quad y^2t \quad 0 \quad 0 \quad 0 \quad 0 \quad 0 \quad x^2w^2 \quad 0 \quad z^2x^2 \quad y^2z^2]. $$
	
	Using the relations in $S$, we see that each of these elements is divisible by $g$ and one choice for the entries of $\tilde{\varphi}_3$ is the following matrix:
	
	$$[0 \quad 0 \quad 0 \quad 0 \quad 0 \quad 0 \quad t \quad 0 \quad 0 \quad 0 \quad 0 \quad 0 \quad w^2 \quad 0 \quad y^2 + z^2 \quad -y^2].$$
	
	By taking the tensor product with $R$, we obtain maps $\psi_2, \psi_3, \varphi_3$ which have the same entries as before, except regarded as elements of $R$ instead of $S$.  
	
	\begin{Lemma} The degree three cohomological operator $\varphi$ is not homotopic to zero on the resolution of $R/(y)$.
	\end{Lemma}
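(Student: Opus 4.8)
The plan is to detect $\text{cls}(\varphi)$ as a nonzero class in $\text{Ext}_R^3(M,M)$ by passing from $\text{Hom}_R(F,F)$ to $\text{Hom}_R(F,M)$. Since $F$ is a bounded-below complex of free modules quasi-isomorphic to $M$, the functor $\text{Hom}_R(F,-)$ carries the augmentation quasi-isomorphism $\pi\colon F_0 \to M$ to a quasi-isomorphism $\text{Hom}_R(F,F) \to \text{Hom}_R(F,M)$, so $\text{cls}(\varphi)$ vanishes precisely when its image, the class of $\pi \circ \varphi_3 \colon F_3 \to M$, is a coboundary in $\text{Hom}_R(F,M)$. Because $\varphi$ is a chain map and $\pi d_1 = 0$, the map $\pi\varphi_3$ is automatically a cocycle, so the entire problem reduces to showing that $\pi \circ \varphi_3$ is \emph{not} of the form $\alpha \circ d_3$ for any $\alpha \in \text{Hom}_R(F_2, M)$.

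Reading the displayed matrix for $\varphi_3$ and reducing modulo $(y)$, the map $\pi\varphi_3\colon F_3 \to M$ is the row whose only nonzero entries are $t$, $w^2$, and $z^2$ (the entry $-y^2$ dies and $y^2+z^2$ becomes $z^2$ in $M$). The key simplification is to work one internal degree at a time: every component of $\pi\varphi_3$ has internal degree $-4$, and since $d_3$ has internal degree $0$, any homogeneous witness $\alpha$ to $\pi\varphi_3$ being a coboundary may be taken of internal degree $-4$ as well. I would then read off all such $\alpha$ from the graded shape $F_2 = R(-3)\oplus R(-4)^3$: the summand $R(-3)$ contributes nothing because $M_{-1}=0$, while each $R(-4)$ summand contributes a scalar $c_i \in M_0 = \mathbb{Q}$. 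Thus $\alpha$ is determined by a triple $(c_1, c_2, c_3) \in \mathbb{Q}^3$.

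Next I would compute $d_3^*(\alpha) = \alpha \circ d_3$ directly from the matrix of $d_3$ (only rows $2$, $3$, $4$ matter, as $\alpha$ kills the first summand), reduce modulo $(y)$, and compare the resulting row with $\pi\varphi_3$ entry by entry. The comparison forces incompatible conditions: the columns carrying $w^2$ and $z^2$ require $(c_2 - 1)w^2 = 0$ and $(c_2-1)z^2 = 0$ in $M$, while another column requires $c_2 t = 0$. The crux is therefore a concrete nonvanishing check inside the Artinian ring $M \cong \mathbb{Q}[x,z,w,t]/(x^4,w^4,z^4,z^2w^2,xt,zt,wt,\,x^2+z^2+w^2)$: one verifies that $w^2 = -x^2 - z^2 \neq 0$ in $M_2$ and that $t \neq 0$ in $M_1$. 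Since the $c_i$ are rational scalars, $(c_2-1)w^2=0$ forces $c_2 = 1$, but then $c_2 t = t \neq 0$ contradicts $c_2 t = 0$.

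I expect the main obstacle to be precisely this last nonvanishing bookkeeping rather than any homological subtlety. The naive test of reducing $\varphi$ modulo the homogeneous maximal ideal $\mathfrak{m}$ of $R$ is useless here, since every entry of $\varphi_3$ already lies in $\mathfrak{m}$; the argument is forced to map into $M$ and to exploit the internal grading to cut the space of potential homotopies down to three scalars. Once the relevant elements of $M$ are shown to be nonzero in their respective degrees, no triple $(c_1,c_2,c_3)$ can reproduce $\pi\varphi_3$, so $\pi\varphi_3$ is not a coboundary and $\text{cls}(\varphi) \neq 0$ in $\text{Ext}_R^3(M,M)$.
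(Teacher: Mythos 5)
Your proof is correct, and it takes a genuinely different route from the paper's. The paper attacks the null-homotopy equation directly in $\mathrm{Hom}_R(F,F)$: it posits $d_1\theta_3 + \theta_2 d_3 = \varphi_3$, uses the internal grading to force $\theta_2 = [0\ b_2\ b_3\ b_4]$ with $b_j \in \mathbb{Q}$ and to constrain the degrees of all sixteen entries of $\theta_3$, and then extracts a contradiction from two of the resulting entrywise equations (the equation $ya_7 + b_4 t = t$ forces $b_4 = 1$, after which $ya_{16} + b_4 z^2 = -y^2$, i.e.\ $ya_{16} = x^2 + w^2$, has no solution against the $\mathbb{Q}$-basis of $R_2$). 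You instead push everything forward along the augmentation $\pi\colon F_0 \to M = R/(y)$, which annihilates the $d_1\theta_3$ term outright; the unknown homotopy collapses to a single map $\alpha = \pi\theta_2\colon F_2 \to M$ of internal degree $-4$, i.e.\ three scalars $(c_1,c_2,c_3)\in\mathbb{Q}^3$, and the contradiction ($c_2 = 1$ from the $w^2$-column versus $c_2 t = 0$ from the $t$-entry of the same row of $d_3$) is read off from a three-dimensional linear-algebra problem, bottoming out in the nonvanishing of $t \in M_1$ and $w^2 \in M_2$, both of which check out in your presentation of $M$. Your version is arguably cleaner, since it eliminates the $\theta_3$ unknowns entirely. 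One small remark: only the easy implication is needed here (a null-homotopy in $\mathrm{Hom}_R(F,F)$ yields a coboundary in $\mathrm{Hom}_R(F,M)$ after composing with $\pi$, since $\pi d_1 = 0$), so the quasi-isomorphism $\mathrm{Hom}_R(F,F) \to \mathrm{Hom}_R(F,M)$ you invoke, while true for a bounded-below complex of projectives, is not actually required for the lemma; it would matter only if you wanted the converse direction.
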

	
	\begin{proof}
		It is enough to show that $\varphi_3$ is not homotopic to the zero map on the resolution of $R/(y)$.
		
		By way of contradiction, suppose there is a homotopy $\theta$ with maps $\theta_2: R(-3) \oplus R(-4)^3 \rightarrow R$ and $\theta_3:R(-4)^4 \oplus R(-5)^6 \oplus R(-6)^6 \rightarrow R(-1)$ such that $d_1 \theta_3 + \theta_2 d_3 = \varphi_3$.  Represent the homotopy maps with matrices: \\
		
		$\theta_3 = [a_1 \quad \cdots \quad a_{16}]$ and $\theta_2 = [b_1 \quad b_2 \quad b_3 \quad b_4]$.
		
		We know that the differentials have internal degree zero, and that the mapping $\varphi$ has internal degree $-4$, since $\text{deg}(f) = 2 = \text{deg}(g)$.  Therefore, the maps $\theta_2, \theta_3$ must each have internal degree $-4$ as well.  This gives us a restriction on the elements $a_i$ and $b_j$ above:  We know that $\theta_2: R(-3) \oplus R(-4)^3 \rightarrow R$, and so we must have that $b_1 = 0$ and that $ |b_j| = 0$ for $j = 2, 3, 4$.  Likewise, we have that $\theta_3: R(-4)^4 \oplus R(-5)^6 \oplus R(-6)^6 \rightarrow R(-1)$.  Therefore, we must have that $a_1 = a_2 = a_3 = a_4 = 0$, $|a_i| = 0$ for $5 \leq i \leq 10$ and $|a_i| = 1$ for $11 \leq i \leq 16$.  
		
		Equating the entries of the matrices $d_1 \theta_3 + \theta_2 d_3$ and $\varphi_3$, we must find solutions to the system of equations \begin{align*}
		y a_7 + b_4 t &= t \\
		y a_{13} + b_3 w^2 &= w^2 \\
		y a_{15} + b_3 z^2 &= y^2 + z^2 \\
		y a_{16} + b_4 z^2 &= -y^2.
		\end{align*}
		
		Using the degree considerations above, and the fact that a $\mathbb{Q}$-basis of $R_1$ is given by the elements $\{x, y, z, w, t \}$, we have that $a_7 = 0$ and $b_4 = 1$ by considering the first equation.  
		
		Now consider the last equation.  We have that $y a_{16} + z^2 = -y^2$.  By using the relation $x^2 + y^2 + z^2 + w^2 = 0$ in $R$, we can replace this equation with $y a_{16} = x^2 + w^2$.  Now, a $\mathbb{Q}$-basis of $R_2$ is given by the set of monomials $\{x^2, y^2, w^2, t^2, xy, xz, xw, yz, yw, zw, yt \}$  We know that $a_{16}$ is a degree 1 element of $R$, so we can rewrite this 
		equation as $y(c_1x + c_2y + c_3z + c_4w + c_5t) = x^2 + w^2$ using the basis for $R_1$ found above.  Thus, after rearranging this equation, we have that $c_1 yx + c_2 y^2 + c_3 yz + c_4 yw + c_5 yt - x^2 - w^2 = 0$, for which no solution exists. 
		
		Therefore, there is no homotopy $\theta$ that will make $\varphi$ homotopic to the zero operator.
		
	\end{proof}
	
	We now consider the degree 2 cohomological operators of $R$ over $S$.  Again, using software such as Macaulay 2, one can compute the annihilator of $g$ in $R$.  We have that $\text{ann}_R(g) = (t, y^2, z^2, w^2)$.
	
	\begin{Lemma}  The degree 2 cohomological operator $\psi_t$ is not homotopic to zero on the free resolution of $R/(y)$.  
	\end{Lemma}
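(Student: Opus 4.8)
The plan is to argue by contradiction exactly as in the preceding lemma, reducing everything to the degree-two component $\psi_{t,2}\colon F_2 \to F_0$ of the operator $\psi_t$. The first step is to record this component explicitly. Since $\psi_t = t\cdot(\tilde{\psi}\otimes_S R)$, its degree-two piece is $t\,\psi_2 = [\,t^2 \quad 0 \quad t(z^2-x^2+w^2) \quad 0\,]$; invoking the relations $xt = zt = wt = 0$ in $S$, and hence in $R$, the third entry vanishes, so that $\psi_{t,2} = [\,t^2 \quad 0 \quad 0 \quad 0\,]$, where $t^2$ is a nonzero basis element of $R_2$. It is enough to show that $\psi_{t,2}$ cannot occur as the degree-two part of a null-homotopy, since a null-homotopy of the full chain map $\psi_t$ would in particular solve the level-two equation.

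Next I would set up a putative null-homotopy. If $\psi_t$ were homotopic to zero there would be maps $s_1\colon F_1 \to F_0$ and $s_2\colon F_2 \to F_1$ with $d_1 s_2 + s_1 d_2 = \psi_{t,2}$ (the sign in front of $s_1 d_2$ is immaterial to what follows). Writing $s_1 = [\,e\,]$ and $s_2 = [\,p_1 \quad p_2 \quad p_3 \quad p_4\,]$, I would then use the internal degrees to pin the entries down. Because $\text{deg}(f) = \text{deg}(g) = 2$, the operator $\psi_t$ has internal degree $\text{deg}(t) + \text{deg}(\psi) = 1 - 2 = -1$, so both $s_1$ and $s_2$ must carry internal degree $-1$. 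Tracking the shifts $F_0 = R$, $F_1 = R(-1)$, and $F_2 = R(-3)\oplus R(-4)^3$ then forces $e \in R_0 = \mathbb{Q}$, $p_1 \in R_1$, and $p_2,p_3,p_4 \in R_2$.

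Finally I would compare the first entries of $d_1 s_2 + s_1 d_2$ and $\psi_{t,2}$, using $d_1 = [\,y\,]$ together with the first entry $yt$ of $d_2$. This yields the single equation $y\,p_1 + e\,yt = t^2$. Expanding $p_1$ in the basis $\{x,y,z,w,t\}$ of $R_1$, the product $y\,p_1$ lies in the span of $\{xy, y^2, yz, yw, yt\}$, while $e\,yt$ is again a multiple of $yt$; since $t^2$ is a basis element of $R_2$ distinct from all of these, the equation has no solution, and the contradiction completes the argument. I expect the only delicate points to be the explicit simplification of $\psi_{t,2}$ via the monomial relations and the degree bookkeeping that eliminates every entry except $t^2$; once those are in hand, the non-solvability of $y\,p_1 + e\,yt = t^2$ is immediate from the stated $\mathbb{Q}$-basis of $R_2$, and the sign ambiguity in the homotopy relation never enters because the left-hand side can never produce a $t^2$ term either way.
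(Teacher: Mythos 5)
Your proposal is correct and follows essentially the same route as the paper's proof: set up a putative null-homotopy, use the internal degree $-1$ of $\psi_t$ together with the twists on the free modules to force the homotopy entries into $R_0$, $R_1$, and $R_2$, and then observe that the first-entry equation $y\,l_1 + k_1\,yt = t^2$ has no solution because $t^2$ is a basis element of $R_2$ independent of $\{xy, y^2, yz, yw, yt\}$. The only difference is cosmetic: you explicitly simplify $(\psi_t)_2$ to $[\,t^2\ 0\ 0\ 0\,]$ using the relations $xt = zt = wt = 0$, whereas the paper only needs the first entry $t^2$.
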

	
	\begin{proof}
		As before, it suffices to show that $(\psi_t)_2$ is not homotopic to the zero map.  Now, by way of contradiction, suppose that there is a homotopy $\kappa$ with component maps $\kappa_1, \kappa_2$ such that $d_1 \kappa_2 + \kappa_1 d_2 = (\psi_t)_2$.  
		
		Represent the maps with matrices $\kappa_1 = [k_1]$ and $\kappa_2 = [l_1 \quad l_2 \quad l_3 \quad l_4]$.  
		Since the internal degrees of the differentials are zero and the internal degree of $\psi_t$ is $\text{deg}(t) + \text{deg}(\psi) = 1 + -2 = -1$, we must have that the internal degrees of these maps are also $-1$.  Now, we have that $\kappa_1: R(-1) \rightarrow R$, so $|k_1| = 0$.  We also have that $\kappa_2:R(-3) \oplus R(-4)^3 \rightarrow R(-1)$.  So we have that $|l_1| = 1$ and that $|l_i| = 2$ for $i = 2, 3, 4$.  
		Now, equating the coefficients of the matrices $d_1 \kappa_2 + \kappa_1 d_2$ and $(\psi_t)_2$, we have that $k_1 yt + y l_1 = t^2$.  Replacing $l_1$ with an arbitrary element of $R_1$ and rearranging, we have that $k_1 yt + y(c_1 x + c_2 y + c_3 z + c_4 w + c_5 t) - t^2 = (k_1 + c_5)yt + c_1 xy + c_2 y^2 + c_3 yz + c_4 yw - t^2 = 0$.  Again using the $\mathbb{Q}$-basis of $R_2$ found above, there are no solutions to this equation.  Therefore, no such homotopy exists.  
	\end{proof}
	
	By combining the previous two lemmas, we have shown that Proposition \ref{nontrivialopex} holds.
	
	\section{Further Directions}
	
	It can be proven that the cohomological operators described in Theorem 2.2 above arise from the canonical action of derived Hochschild cohomology of $R$ over $S$ in the case that $S$ is a characteristic zero ring.  The techniques involved in showing this include the use of $S$-linear maps known as \textit{acyclic twisting cochains}. 
	
	\begin{thm} \label{EPZDCH3}
		If $S$ is a commutative ring of characteristic zero containing a pair of exact zero divisors $(x,y)$, and $R := S/(x)$, then the derived Hochschild cohomology of $R$ over $S$ is given by: $$\RHH^j(R/S) = \begin{cases}
		R & \quad j = 0 \\
		0 & \quad j = 1, j < 0 \\
		\text{ann}_R(y) \cdot \gamma^{j/2}       & \quad j \text{ even }, j \geq 2 \\
		R/(y) \cdot \gamma^{(\frac{j-3}{2})}\eta  & \quad j \text{ odd }, j \geq 3 \\
		
		\end{cases}$$
		The ring structure, inherited as a quotient of a subring of $R[\gamma, \eta]$, is given in the following way:  If $z, z' \in \text{ann}_R(y)$, then $(z \gamma^k)(z' \gamma^i) = zz'\gamma^{i + j}$.  If $p \in R/(y)$, $z \in \text{ann}_R(y) \subset R$, then $(z\gamma^k)  (p\gamma^i\eta) = zp \gamma^{i + k} \eta$, where $zp$ is the action of $z \in R$ on $p \in R/(y)$.  For $p, q \in R/(y), (p \gamma^i\eta)(q \gamma^k\eta) = pq \gamma^{i+k}\eta^2 = 0$.  
		
		Moreover, if $M, N$ are $R$-modules, for $z\gamma \in \RHH^2(R/S)$, the action on $\text{Ext}_R(M,N)$ coincides with the action by $\psi_z$ as in Theorem \ref{bigtheorem1}.  Furthermore, the action of $\eta \in \RHH^3(R/S)$ coincides with the action by $\varphi$.
	\end{thm}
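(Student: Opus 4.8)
The plan is to interpret $\RHH^*(R/S)$ as $\text{Ext}^*_{R^{\mathrm e}}(R,R)$, the Ext algebra of the diagonal bimodule over the derived enveloping algebra $R^{\mathrm e} := R \otimes^{\mathbf L}_S R$, and to compute it from an explicit differential graded (DG) algebra resolution of $R$ over $S$. Because $(x,y)$ is an exact pair of zero divisors, the acyclic closure of $R = S/(x)$ over $S$ is the DG algebra $A = S\langle e_1, e_2 \rangle$ with $|e_1| = 1$, $|e_2| = 2$, $d(e_1) = x$, and $d(e_2) = y e_1$; a direct check shows its underlying complex is exactly the $2$-periodic resolution $\cdots \xrightarrow{y} S \xrightarrow{x} S \xrightarrow{y} S \xrightarrow{x} S$, which is acyclic precisely because $\text{ann}_S(x) = (y)$ and $\text{ann}_S(y) = (x)$, so the Tate process terminates after two variables. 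The characteristic zero hypothesis enters here: it lets me replace divided powers by ordinary powers ($e_2^{(i)} = e_2^i / i!$), so that $A$ is an honest $S$-free commutative DG algebra and no extra divided power operators pollute the cohomology.

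First I would extract the additive structure. Tensoring $A$ down to $R$ yields the DG $R$-algebra model $R\langle e_1, e_2\rangle$ for $R^{\mathrm e}$, whose differential becomes $d(e_1) = 0$ and $d(e_2) = y e_1$ after reducing $x$ to $0$. Resolving $R$ over this model by a twisted tensor (bar-type) construction and computing $\text{RHom}$, the answer is organized by Koszul duality: the exterior generator $e_1$ contributes a polynomial cohomology class $\gamma$ of degree $2$, while the even Tate variable $e_2$ contributes an exterior class $\eta$ of degree $3$. The coefficients in each degree are governed by the residual $y$-multiplications in the dualized complex, and this is exactly what produces $\text{ann}_R(y)$ in each even degree $\ge 2$ and $R/(y)$ in each odd degree $\ge 3$, with $R$ in degree $0$; degree $1$ vanishes because the lowest cohomology generators $\gamma$ and $\eta$ sit in degrees $2$ and $3$, and negative degrees vanish for degree reasons. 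The restriction to $\text{ann}_R(y)$ is not accidental: it mirrors the computation in the proof of Proposition \ref{chainmapprop}, where $z\tilde{\psi}$ descends to a chain map only when $zy = 0$, and the passage to $R/(y)$ coefficients on $\eta$ reflects that $y\tilde{\varphi}$ bounds.

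Next I would read off the ring structure from the commutative DG (co)algebra structure carried by the resolution. The polynomial behaviour of $\gamma$ gives $(z\gamma^k)(z'\gamma^i) = zz'\gamma^{i+k}$, where the coefficient product $zz'$ lands in $\text{ann}_R(y)$ since that set is an ideal; the mixed products are controlled by the residual $R$-action of $\text{ann}_R(y)$ on $R/(y)$, yielding $(z\gamma^k)(p\gamma^i\eta) = zp\,\gamma^{i+k}\eta$; and the exterior nature of $\eta$, dual to the single even Tate variable $e_2$, forces $\eta^2 = 0$, so every product of two odd classes vanishes. This last relation is the exact, characteristic zero sharpening of the homotopy statement $2\varphi^2 \simeq 0$ from Theorem \ref{bigtheorem1}(5): once $\tfrac12 \in S$ the factor of $2$ is invertible and the class collapses to zero, in agreement with Corollary \ref{EXTeltscor}.

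The crux, and the step I expect to be hardest, is the final compatibility assertion: that the action of $z\gamma \in \RHH^2$ on $\text{Ext}_R(M,N)$ is the action of $\psi_z$, and that $\eta \in \RHH^3$ acts as $\varphi$. Here I would use an \emph{acyclic twisting cochain} $\tau$ attached to $A$ to transport the $R^{\mathrm e}$-module structure down to ordinary $R$-free resolutions: given a resolution $(F, d^F)$ of $M$, the components of $\tau$ dual to $e_1$ and $e_2$ produce $S$-linear lifts satisfying precisely the defining equations $x\tilde{\psi} = \tilde{d}^2$ and $y\tilde{\varphi} = \tilde{d}\tilde{\psi} - \tilde{\psi}\tilde{d}$ of Construction \ref{operatorsconstruction}, which are nothing but the relations $d(e_1) = x$ and $d(e_2) = y e_1$ of $A$ transcribed onto $F$. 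The difficulty is bookkeeping: one must chase $\tau$ through the quasi-isomorphisms relating the $R^{\mathrm e}$-level computation to the complex $\text{Hom}_R(F, N)$ that computes $\text{Ext}_R(M,N)$, and reconcile the Koszul signs of the twisting cochain formalism with the commutator conventions of Sections 2 and 3. Once the induced operators are matched with $\tilde{\psi}$ and $\tilde{\varphi}$ on the nose, Theorem \ref{bigtheorem1}(1) guarantees the resulting classes are independent of every choice, so that the abstract action of $z\gamma$ and $\eta$ coincides with $\psi_z$ and $\varphi$ up to the predicted sign.
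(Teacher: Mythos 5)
The paper does not actually prove Theorem \ref{EPZDCH3}: it is stated in the ``Further Directions'' section with the proof deferred to \cite{Win17}, the only indication of method being that it uses acyclic twisting cochains. Your outline is consistent with that indicated route. You correctly identify the acyclic closure $A = S\langle e_1, e_2\rangle$ with $d(e_1) = x$, $d(e_2) = y e_1$, observe that exactness of the pair $(x,y)$ is precisely what makes its underlying two-periodic complex an $S$-free resolution of $R$ (so the Tate process stops), and you propose to transport the resulting classes $\gamma$ and $\eta$ onto an arbitrary lifted resolution via a twisting cochain, which is exactly where the defining equations $x\tilde{\psi} = \tilde{d}^2$ and $y\tilde{\varphi} = \tilde{d}\tilde{\psi} - \tilde{\psi}\tilde{d}$ should come from. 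The skeleton matches the paper's stated method.

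However, as written this is a plan rather than a proof, and the two steps carrying all the content are asserted rather than executed. First, the additive computation: you never exhibit the complex whose cohomology is claimed to be $\text{ann}_R(y)\cdot\gamma^{j/2}$ in even degrees and $(R/(y))\cdot\gamma^{(j-3)/2}\eta$ in odd degrees $\geq 3$. To get that answer you must write down the Koszul-dual complex with underlying module $R[\gamma]\otimes\Lambda(\eta)$, identify its differential (alternately multiplication by $y$ and zero, induced by $d(e_2) = ye_1$), verify that it genuinely computes $\text{Ext}_{R\otimes_S A}(R,R)$, and address the divided-power versus polynomial issue on $\gamma$ --- a second place, beyond $2\eta^2 = 0$, where characteristic zero is needed. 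Saying the coefficients are ``governed by the residual $y$-multiplications'' states the desired conclusion; it is not an argument. Second, and more seriously, the identification of the actions of $z\gamma$ and $\eta$ with $\psi_z$ and $\varphi$ is the entire point of the ``Moreover'' clause, and you explicitly concede that the bookkeeping --- constructing the acyclic twisting cochain, showing its components reproduce $\tilde{\psi}$ and $\tilde{\varphi}$ on a lifted resolution, and reconciling signs through the chain of quasi-isomorphisms down to $\text{Hom}_R(F,N)$ --- has not been done. Until those two steps are carried out in detail, the theorem remains unproved by your outline, even though the outline points in the right direction.
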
 

For details, see \cite{Win17}.

	There are still a great deal of questions that would be interesting to answer about the cohomological operators above.  For instance, under what conditions (on the ring, module, exact zero divisor) do the cohomological operators vanish?  Are there classes of rings (or modules) under which the operators are always non-vanishing?  In particular, if $S$ is a complete intersection ring, then $R$ is also a complete intersection \cite{Andre85}.  As such, both $S$ and $R$, when thought of as quotients of some regular ring $Q$, have cohomological operators over $Q$.  Do the cohomological operators of $R$ over $S$ provide any new cohomological information?  
	
	Finally, in \cite{AvSun}, the many different ways of computing cohomological operators on quotients by a regular sequence are compared and shown to provide the same action up to sign.  It would be of interest to determine if these cohomological operators can similarly be constructed in alternative ways, and if so, if their actions agree up to sign, as in the regular sequence case.  
	
	\subsection*{Acknowledgements} The work in this paper comes from a portion of the author's dissertation at the University of Nebraska. The author would like to thank his advisor, Mark Walker, for his support in this project and for careful readings of early drafts of this paper.
	
	\bibliographystyle{amsalpha}
	\bibliography{ThesisBib2}
	
\end{document}